\documentclass[english]{lmcs}
\pdfoutput=1

\usepackage{lastpage}
\lmcsdoi{21}{3}{29}
\lmcsheading{}{\pageref{LastPage}}{}{}%
{Dec.~11,~2023}{Sep.~24,~2025}{}

\usepackage{comment}
\usepackage[T1]{fontenc}
\usepackage[utf8]{inputenc}
\usepackage{babel}
\usepackage{amstext}
\usepackage{amssymb}

\usepackage[unicode=true,pdfusetitle,
 bookmarks=true,bookmarksnumbered=false,bookmarksopen=false,
 breaklinks=false,pdfborder={0 0 0},pdfborderstyle={},backref=false,colorlinks=true]
 {hyperref}
\hypersetup{
 linkcolor=brown, citecolor=blue, pdfstartview={FitH}, hyperfootnotes=false, unicode=true}

\makeatletter
\numberwithin{equation}{section}
\numberwithin{figure}{section}
\theoremstyle{plain}

\usepackage{tikz-cd}
\usepackage{bbm}

\makeatother
\begin{document}
\title[Multi-representation associated to a numbered subbasis ]{Multi-representation associated to the numbering of a subbasis and
formal inclusion relations}
\author[E. Rauzy]{Emmanuel Rauzy \lmcsorcid{0009-0005-0395-3311}}

\address{Universität der Bundeswehr München, München, Germany}

\thanks{The author is funded by an Alexander von Humboldt Research Fellowship.}
\begin{abstract}
We revisit Dieter Spreen's notion of a representation associated to
a numbered basis equipped with a strong inclusion relation. We show
that by relaxing his requirements, we obtain different classically
considered representations as subcases, including representations
considered by Grubba, Weihrauch and Schröder. We show that the use
of an appropriate strong inclusion relation guarantees that the representation
associated to a computable metric space seen as a topological space
always coincides with the Cauchy representation. We also show how
the use of a formal inclusion relation guarantees that when defining
multi-representations on a set and on one of its subsets, the obtained
multi-representations will be compatible, i.e. inclusion will be a
computable map. The proposed definitions are also more robust under
change of equivalent bases. 
\end{abstract}

\maketitle

\section{Introduction}

In order to study computability in areas of mathematics where mathematician
freely define very abstract objects, one has first to answer the question:
how can a machine manipulate an abstract object? 

Turing, in his seminal paper \cite{Turing1937}, gave a first approach
to this problem, and he was able to define computable functions of
a computable real variable. 

Kleene's general solution is known as \emph{realizability}: to represent
abstract objects by concrete descriptions, thanks to the use of semantic
functions which give meaning to a priori inert symbols. A function
between abstract objects is then called \emph{computable} if it can
be realized by a computable function on concrete objects. 

In the Type Two theory of Effectivity (TTE), the considered set of
concrete objects is the Baire space $\mathbb{N}^{\mathbb{N}}$, and
the notion of computability is given by Turing machines that work
with infinite tapes. The semantic functions that map elements of the
Baire space to abstract objects are called \emph{representations},
they were introduced by Kreitz and Weihrauch in \cite{Kreitz1985}.
This notion was extended by Schröder to \emph{multi-representations}
in his dissertation \cite{Schroeder2003}.

\medskip

One of the defining features of TTE is that the study of computable
functions is always related to the study of continuity, because, on
the Baire space, a function is continuous if and only if it is computable
with respect to some oracle. Thanks to Schröder's generalization of
Weihrauch's notion of \emph{admissible representation} \cite{Schroeder2002},
this phenomenon extends to other topological spaces. One can then
investigate in parallel computability and continuity, and reductions
between problems, or translations between representations, exist both
in terms of computable functions and in terms of continuous functions. 

\medskip

A celebrated theorem of Schröder \cite{Schroeder2003} characterizes
those topological spaces that admit admissible multi-representations
as those whose sequentialization is the quotient of countably based spaces (qcb-spaces). On such a space,
there is a unique admissible multi-representation, up to continuous
translation. 

However, when studying computability, representations are considered
up to computable translation. 

One could of course ask to distinguish amongst admissible representations
those that are appropriate to study computability -and possibly call
them ``computably admissible''. 

However, because qcb-spaces can have continuously many auto-homeomorphisms,
there is no hope of distinguishing a single equivalence class of representation
as \emph{the} correct one to study computability. Indeed, if $\rho:\subseteq\mathbb{N}^{\mathbb{N}}\rightarrow X$
is an admissible representation of a topological space $X$, which
we know to be appropriate for studying computability, then for any
auto-homeomorphism $\Theta:X\rightarrow X$ of $X$, $\Theta\circ\rho$
will be another representation of $X$ which will have exactly the
same properties as $\rho$. The representation $\Theta\circ\rho$
is computably equivalent to $\rho$ exactly when $\Theta$ and its
inverse are $(\rho,\rho)$-computable, and thus there can only be
countably many of these representations that are computably equivalent. 

In practice, there is often a single correct choice of a representation
on a set, but this comes from the fact that we consider sets that
have more intrinsic structure than just a topology. For example, any
permutation of $\mathbb{N}$ is an auto-homeomorphism of $\mathbb{N}$
for the discrete topology, and thus there is no hope of fixing \emph{the}
correct representation of $\mathbb{N}$ using only its topology. However,
if we ask that addition should be computable, or that the order relation
should be decidable, etc, we may end up distinguishing a single representation
as the appropriate one. Such questions are linked to the study of
computable model theory. 

\medskip

A possible way to equip a topological space $X$ with an additional
structure that will permit to distinguish a single class of representations
as ``computably admissible'', and that has been used in computable
analysis since early work of Weihrauch \cite{Kreitz1985,Weihrauch1987},
is to consider a numbered basis $(\mathfrak{B},\beta)$ associated
to $X$, i.e. a countable basis $\mathfrak{B}$ for the topology of
$X$ together with a partial surjection $\beta:\subseteq\mathbb{N}\rightarrow\mathfrak{B}$.
Note that the crucial point here is that by fixing a numbering of
a basis we are already choosing the desired notion of computability.
Fixing an abstract basis, as a set and not as a numbered set, would
not be sufficient for this purpose. This is a very natural way to
proceed because, in practice, when working with explicit topological
spaces, there is often an obvious numbering of a basis that stands
out as the correct one to study computability. 

In fact, one can easily remark that in many settings where authors
have used numbered bases, one might as well have considered only numbered
subbases and obtained the same results. Additionally, because it is
important for us to be able to study computability on non-$\text{T}_{0}$
spaces (for instance to include at least all finite topological spaces
in our field of study), we will allow the use of multi-representations. 

This gives the setting of the present study: a topological space equipped
with a numbered subbasis, on which we want to define a multi-representation. 

\medskip

However, even once a numbered subbasis has been fixed on a space,
there still are several, all seemingly natural, but sometimes non-equivalent,
ways to define a multi-representation associated to this numbered
subbasis. 

In the present article, we consider a family of representations introduced
by Spreen in \cite{Spreen2001}, defined thanks to a \emph{strong
inclusion relation}. 

In \cite{Spreen2001}, these representations are used in a context
in which they are computably equivalent to the more commonly used
``standard representation'' of Weihrauch.

Our purpose here is to show that in more general settings, representations
defined thanks to strong inclusion relations do not have to be equivalent
to the standard representation of Weihrauch, and that they can in
some cases be \emph{better behaved} than this standard representation. 

\medskip

Throughout, we fix a set $X$, and denote by $(\mathfrak{B},\beta)$
a numbered subbasis for $X$. This is simply a countable subset of
$\mathcal{P}(X)$ equipped with a partial surjection $\beta:\subseteq\mathbb{N}\rightarrow\mathfrak{B}$.

There are two main approaches that authors have used to define a multi-represen\-tation
$\rho:\subseteq\mathbb{N}^{\mathbb{N}}\rightrightarrows X$ associated
to the numbered subbasis $(\mathfrak{B},\beta)$.

In each case the $\rho$-name of a point $x$ of $X$ is a sequence
$(u_{n})_{n\in\mathbb{N}}$ of $\beta$-names of basic sets which
form a neighborhood basis of $x$. But with this idea, there are two
possible approaches:
\begin{itemize}
\item The sequence $(u_{n})_{n\in\mathbb{N}}$ is asked to contain $\beta$-names
for\emph{ sufficiently many basic} \emph{sets} so as to define a neighborhood
basis of $x$. This representation was first used by Weihrauch in
\cite{Weihrauch1987}. It is particularly important since it was used
by Schröder to prove the characterization theorem of topological spaces
that admit admissible multi-representations. See \cite{Schroeder2001,Schroeder2002}.
It also appears in \cite[Example 2.2]{Kihara2022}. 
\item Or the sequence $(u_{n})_{n\in\mathbb{N}}$ is asked to contain \emph{all
the $\beta$-names for basic sets that contain $x$.} This was first
used in \cite{Kreitz1985} under the name \emph{standard representation}.
This is also the definition of Weihrauch and Grubba \cite{Weihrauch2009ElementaryCT}.
This is now the common approach: see \cite{Hoyrup2016,Schroeder2021}. 
\end{itemize}
In Schröder's dissertation \cite{Schroeder2003}, both approaches
are used, depending on whether the focus is solely continuity (first
approach, see Section 3.1.2 in \cite{Schroeder2003}, which deals
with limit spaces and limit bases), or also computability (second
approach, see Section 4.3.6 in \cite{Schroeder2003}). 

We thus define two multi-representations associated to the numbered
subbasis $(\mathcal{B},\beta)$. They are denoted by $\rho_{\beta}^{\min}:\subseteq\mathbb{N}^{\mathbb{N}}\rightrightarrows X$
and $\rho_{\beta}^{\max}:\subseteq\mathbb{N}^{\mathbb{N}}\rightrightarrows X$,
and are defined by:
\[
\rho_{\beta}^{\min}(f)\ni x\Leftrightarrow\begin{cases}
\forall n\in\text{Im}(f),\,n\in\text{dom}(\beta)\,\&\,x\in\beta(n),\\
\forall B\in\mathfrak{B},\,x\in B\Rightarrow\exists n_{1},...,n_{k}\in\mathbb{N},\,\beta(f(n_{1}))\cap...\cap\beta(f(n_{k}))\subseteq B;
\end{cases}
\]
\[
\rho_{\beta}^{\max}(f)\ni x\iff\text{Im}(f)=\{n\in\text{dom}(\beta),\,x\in\beta(n)\}.
\]

Note that there always exists a translation $\rho_{\beta}^{\max}\le\rho_{\beta}^{\min}$,
witnessed by the identity realizer on Baire space.

Both approaches highlighted above are particular cases of a more general
definition based on a strong inclusion relation, corresponding respectively
to the coarsest and finest reflexive strong inclusion relations.

The idea of replacing set inclusion by a formal relation which need
not be extensional can be traced back to Schreiber and Weihrauch in
\cite{Weihrauch1981}. It is now commonly used in the theory of domain
representation \cite{StoltenbergHansen2008}. 

The use of a formal inclusion relation in relation with numbered bases
was initiated by Dieter Spreen in \cite{Spr98}, in terms of \emph{strong
inclusion relation}s. 
\begin{defiC}
[{\cite[Definition 2.3]{Spr98}}]Let $\mathfrak{B}$ be a subset of
$P(X)$, and $\beta:\subseteq\mathbb{N}\rightarrow\mathfrak{B}$ a
numbering of $\mathfrak{B}$. Let $\mathring{\subseteq}$ be a binary
relation on $\text{dom}(\beta)$. We say that $\mathring{\subseteq}$
is a \emph{strong inclusion relation for $(\mathfrak{B},\beta)$}
if the following hold:
\begin{enumerate}
\item The relation $\mathring{\subseteq}$ is transitive;
\item $\forall b_{1},b_{2}\in\text{dom}(\beta),\,b_{1}\mathring{\subseteq}b_{2}\implies\beta(b_{1})\subseteq\beta(b_{2})$.
\end{enumerate}
\end{defiC}

The general definition of the multi-representation associated to a
numbered subbasis based on a strong inclusion is the following:
\begin{itemize}
\item A sequence $(u_{n})_{n\in\mathbb{N}}$ of $\beta$-names constitutes
a $\rho$-name of $x$ if it contains sufficiently many basic sets,
but \emph{sufficiently many with respect to the strong inclusion. }
\end{itemize}
More precisely, consider a numbered subbasis $(\mathfrak{B},\beta)$. 

We denote by $(\hat{\mathfrak{B}},\hat{\beta})$ the numbered basis
induced by $(\mathfrak{B},\beta)$: $\hat{\mathfrak{B}}$ it the set
of all finite intersections of elements of $\mathfrak{B}$, and $\hat{\beta}$
is the naturally associated numbering: the $\hat{\beta}$-name of
an intersection $B_{1}\cap...\cap B_{n}$ encodes $\beta$-names for
$B_{1}$, ...,$B_{n}$. 

When the basis $(\hat{\mathfrak{B}},\hat{\beta})$ is equipped with
a strong inclusion relation $\mathring{\subseteq}$, we define a multi-representation
$\rho_{\beta}^{\mathring{\subseteq}}:\subseteq\mathbb{N}^{\mathbb{N}}\rightrightarrows X$
by: 
\[
\rho_{\beta}^{\mathring{\subseteq}}(f)\ni x\iff\begin{cases}
\forall b_{1}\in\text{Im}(f),\,b_{1}\in\text{dom}(\hat{\beta})\,\&\,x\in\hat{\beta}(b_{1}),\\
\forall b_{1}\in\text{dom}(\hat{\beta}),\,x\in\hat{\beta}(b_{1})\implies\exists b_{2}\in\text{Im}(f),\,b_{2}\mathring{\subseteq}b_{1}
\end{cases}.
\]

Let $X$ be a set equipped with a numbered subbasis $(\mathcal{B},\beta)$,
and suppose that the induced numbered basis $(\hat{\mathfrak{B}},\hat{\beta})$
admits a strong inclusion relation $\mathring{\subseteq}$. When $x$
is a point of $X$, a subset $A\subseteq\text{dom}(\hat{\beta})$
is called a \emph{strong neighborhood basis for $x$ }if and only
if 
\[
\forall b\in A,\,x\in\hat{\beta}(b);
\]
\[
\forall b_{1}\in\text{dom}(\hat{\beta}),\,x\in\hat{\beta}(b_{1})\implies\exists b_{2}\in A,\,b_{2}\mathring{\subseteq}b_{1}.
\]
Thus the $\rho_{\beta}^{\mathring{\subseteq}}$-name of a point $x$
is a list of $\hat{\beta}$-names that forms a strong neighborhood
basis of $x$.

Note that in general, not all points of $X$ need to have a strong
neighborhood basis with respect to a strong inclusion relation $\mathring{\subseteq}$.
The formula given above for $\rho_{\beta}^{\mathring{\subseteq}}$
correctly defines a multi-representation if and only if every point
admits a strong neighborhood basis, otherwise it is not surjective.
Thus in the present article, we will always suppose that all points
admit strong neighborhood bases. Two natural conditions are sufficient
for all points to admit strong neighborhood bases: 
\begin{itemize}
\item If $\mathring{\subseteq}$ is a reflexive relation. Reflexive strong
inclusions induce equivalence relations, this is often useful, see
\cite{Rauzy2023NewDefs}. 
\item If the basis is a \emph{strong basis }in the sense of Spreen \cite{Spr98}:
if for every two $\hat{\beta}$-names $a$ and $b$ of basic sets
that intersect, and any $x$ in their intersection, there is a $\hat{\beta}$-name
$c$ of a set that contain $x$ and for which $c\mathring{\subseteq}a$
and $c\mathring{\subseteq}b$. 
\end{itemize}
The definition of $\rho_{\beta}^{\mathring{\subseteq}}$ generalizes
the two previous definitions:
\begin{itemize}
\item When we take the strong inclusion to be the actual inclusion relation,
i.e. $b_{1}\mathring{\subseteq}b_{2}\iff\hat{\beta}(b_{1})\subseteq\hat{\beta}(b_{2})$,
$\rho_{\beta}^{\mathring{\subseteq}}$ is exactly $\rho_{\beta}^{\min}$.
Note that this is the coarsest strong inclusion relation. 
\item When we take the strong inclusion to be equality, i.e. $b_{1}\mathring{\subseteq}b_{2}\iff b_{1}=b_{2}$,
$\rho_{\beta}^{\mathring{\subseteq}}$ is exactly $\rho_{\beta}^{\max}$.
Note that equality is the finest reflexive strong inclusion relation. 
\end{itemize}
Note that for every strong inclusion relation $\mathring{\subseteq}$
with respect to which all points admit strong neighborhood bases,
we have $\rho_{\beta}^{\max}\le\rho_{\beta}^{\mathring{\subseteq}}\le\rho_{\beta}^{\min}$,
the identity on Baire space being a realizer for both translations. 

\medskip

The central idea of the present article can be summarized as follows. 

Let $(\mathcal{B},\beta)$ be a numbered basis for a set $X$. 

The correct definition of the representation associated to $\beta$
will often involve giving more information about points than simply
listing names for a neighborhood basis. For instance in a metric space,
we want the name of a point $x$ to give us access to open balls with
small radii that contain a $x$ -if $x$ is isolated, a neighborhood
basis could contain only balls that are explicitly given with big
radii, and we would have to be able to guess that $x$ is isolated
to make use of this name. 

There is a certain amount of ``additional information'' required
to define the correct representation. It is precisely determined by
an appropriate choice of a strong inclusion relation. 

The solution devised by Kreitz and Weihrauch in \cite{Kreitz1985}
is to systematically provide the maximal amount of information that
the basis can provide, giving \emph{all} names of balls that contain
a given point $x$. This corresponds to systematically using the finest
reflexive strong inclusion relation. 

Using a strong inclusion relation different from the finest one gives
a precise quantification of the information that the basis should
provide, offering a better understanding of the representation at
hand. Again, in metric spaces, it is clear that the correct amount
of information required to manipulate a point $x$ is to have balls
with arbitrarily small radii that contain $x$ be given. Listing \emph{all}
the balls with rational radii that contain $x$ obviously involves
listing much more than what is actually necessary.

And the main problem that arises with the Kreitz-Weihrauch method,
where the maximal amount of information is systematically given, is
that as soon as one tries to use non computably enumerable bases,
this amount of information becomes too important, and no point has
a computable name. For instance, when trying to use as a basis of
the topology of $\mathbb{R}$ the set of open intervals with computable
reals as endpoints, the representation $\rho_{\beta}^{\max}$ has
no computable point. But for every reasonable notion of ``computable
equivalence'' of bases, the basis that uses rational intervals is
equivalent to the one that uses intervals with computable reals as
endpoints -just like classically the basis consisting of all open
intervals is equivalent to the one where rational intervals are used. 

Quantifying more precisely the amount of information a basis is supposed
to provide, thanks to a strong inclusion relation, allows to avoid
this caveat. 

\medskip

This article is organized around five main topics, which correspond
respectively to Sections \ref{sec:Admissibility-theorem}, \ref{sec: Set and Subset},
\ref{sec:Metric-spaces-and}, \ref{sec:Semi-decidable strong inclusion }
and \ref{sec:Equivalence-of-bases}. 

Firstly, we show that in any situation, all multi-representations
$\rho_{\beta}^{\min}$, $\rho_{\beta}^{\max}$ and $\rho_{\beta}^{\mathring{\subseteq}}$
are admissible for the topology generated by the basis $\mathfrak{B}$.
They can thus be used interchangeably when focusing on continuity. 

Then, we discuss what happens when considering the multi-representations
associated to a set $X$ and to a subset $A$ of $X$, and investigate
whether we can guarantee that the embedding $A\hookrightarrow X$
will be computable.

We then focus on metric spaces, and on the problem of defining, thanks
to a numbering of open balls, a representation that is equivalent
to the Cauchy representation.

We then consider semi-decidable strong inclusions. We render explicit
the benefit of using a c.e. strong inclusion relation. We also note
that when a basis $\mathcal{B}$ is equipped with a partial numbering
$\beta$, it is always possible to totalize $\beta$ while preserving
the representations $\rho_{\beta}^{\min}$, $\rho_{\beta}^{\max}$
and $\rho_{\beta}^{\mathring{\subseteq}}$, but it might not be possible
to preserve the fact that $\text{dom}(\beta)$ was equipped with a
semi-decidable strong inclusion.

Finally, we study different notions of equivalence of bases. We give
examples of bases that one would naturally expect to be equivalent,
but which yield different representations $\rho^{\max}$. We describe
a notion of \emph{representation-equivalent bases} which guarantees
that two bases yield equivalent representations. 

\medskip

\textbf{Acknowledgements}. I thank Mathieu Hoyrup for a careful reading
of the present article. I would also like to thank Vasco Brattka for
helpful discussions, and Andrej Bauer for relevant references. Finally,
I thank the anonymous referee for many valuable comments and suggestions. 

\section{\label{sec:Preliminaries}Preliminaries }

\subsection{Multi-representations: translation and admissibility }
\begin{defi}
A \emph{multi-representation} \cite{Schroeder2003} of a set $X$
is a partial multi-function: $\rho:\subseteq\mathbb{N}^{\mathbb{N}}\rightrightarrows X$
such that every point of $X$ is the image of some point in $\text{dom}(\rho)$. 
\end{defi}

Note that, extentionally, a partial multi-function: $\rho:\subseteq\mathbb{N}^{\mathbb{N}}\rightrightarrows X$
is nothing but a total function to the power-set of $X$: $\rho:\mathbb{N}^{\mathbb{N}}\rightarrow\mathcal{P}(X)$.
The domain of $\rho$ in this case is $\{f\in\mathbb{N}^{\mathbb{N}},\,\rho(f)\ne\emptyset\}$.
But in terms of the way one uses a multi-representation, it should
not be seen as a function to $\mathcal{P}(X)$. For instance, the
preimage of a subset $A\subseteq X$ by $\rho$ is not defined as
$\{f\in\mathbb{N}^{\mathbb{N}},\,\rho(f)=A\}$, but as $\rho^{-1}(A)=\{f\in\mathbb{N}^{\mathbb{N}},\,\rho(f)\cap A\neq\emptyset\}$.

For $f\in\mathbb{N}^{\mathbb{N}}$, if $x\in\rho(f)$, then $f$ is
a $\rho$\emph{-name} of $x$. 

\begin{defi}
A partial function $H:\subseteq X\rightrightarrows Y$ between multi-represented
sets $(X,\rho_{1})$ and $(Y,\rho_{2})$ is called $(\rho_{1},\rho_{2})$-computable
if there exists a computable function\footnote{Or ``computable functional'' in the sense of Kleene \cite{Kleene1952}
or Grzegorczyk \cite{Grzegorczyk1955}, see \cite{Weihrauch2000}.} $F:\subseteq\mathbb{N}^{\mathbb{N}}\rightarrow\mathbb{N}^{\mathbb{N}}$
defined at least on all $f\in\text{dom}(\rho_{1})$ for which $\rho_{1}(f)\cap\text{dom}(H)\neq\emptyset$
such that 
\[
\forall x\in\text{dom}(H),\,\forall f\in\text{dom}(\rho_{1}),\,x\in\rho_{1}(f)\implies H(x)\in\rho_{2}(F(f)).
\]
\end{defi}

This definition has a very simple interpretation: the multi-function
$H$ is $(\rho_{1},\rho_{2})$-computable if there is a computable
map which, when given a name for a point $x$, produces a name for
the image of $x$ by $H$. 

When $H:\subseteq X\rightarrow Y$ is a partial function between represented
spaces $(X,\rho_{1})$ and $(Y,\rho_{2})$, any partial function $F:\subseteq\mathbb{N}^{\mathbb{N}}\rightarrow\mathbb{N}^{\mathbb{N}}$
which satisfies the condition written in the above definition is called
a \emph{realizer} for $H$. A function is computable if and only if
it has a computable realizer. 
\begin{defi}
If $\rho_{1}$ and $\rho_{2}$ are multi-representations of a set
$X$, we say that \emph{$\rho_{1}$ translates to $\rho_{2}$, }denoted
by $\rho_{1}\le\rho_{2}$, if the identity $\text{id}_{X}$ of $X$
is $(\rho_{1},\rho_{2})$-computable. The multi-representations $\rho_{1}$
and $\rho_{2}$ are called equivalent if each one translates to the
other, this is denoted by $\rho_{1}\equiv\rho_{2}$. 
\end{defi}

The fact that $\rho_{1}$ translates to $\rho_{2}$ can be interpreted
as meaning that the $\rho_{1}$-name of a point in $X$ provides more
information on this point than a $\rho_{2}$-name would. Note that
for representations, $\rho_{1}\le\rho_{2}$ if and only if there exists
a computable $h:\subseteq\mathbb{N}^{\mathbb{N}}\rightarrow\mathbb{N}^{\mathbb{N}}$
so that $\rho_{2}\circ h=\rho_{1}$, while for multi-representations,
$\rho_{1}\le\rho_{2}$ if and only if there exists a computable $h:\subseteq\mathbb{N}^{\mathbb{N}}\rightarrow\mathbb{N}^{\mathbb{N}}$
so that $\rho_{1}\sqsubseteq\rho_{2}\circ h$, i.e. for all $x\in\text{dom}(\rho_{1})$,
$\rho_{1}(x)\subseteq\rho_{2}\circ h(x)$. 

We also have a notion of continuous reduction: 
\begin{defi}
If $\rho_{1}$ and $\rho_{2}$ are multi-representations of a set
$X$, we say that \emph{$\rho_{1}$ continuously translates to $\rho_{2}$}
if the identity $\text{id}_{X}$ of $X$ admits a continuous realizer.
This is denoted by $\rho_{1}\le_{t}\rho_{2}$, where the subscript
$t$ stands for \emph{topological}. 

The multi-representations $\rho_{1}$ and $\rho_{2}$ are \emph{continuously
equivalent} when each one continuously translates to the other. This
is denoted by $\rho_{1}\equiv_{t}\rho_{2}$. 
\end{defi}

When $X$ is equipped with a multi-representation $\rho:\subseteq\mathbb{N}^{\mathbb{N}}\rightarrow X$,
we can naturally equip $X$ with a topology, the final topology $\mathcal{T}_{\rho}$
of the multi-representation. The topology $\mathcal{T}_{\rho}$ is
defined as follows \cite[p. 53]{Schroeder2003}: a set $A$ is open
in $X$ if and only if $A=\rho(\rho^{-1}(A))$ and $\rho^{-1}(A)$
is open in the Baire space topology (more precisely: open in the topology
on $\text{dom}(\rho)$ induced by the topology of Baire space).

Note that when $\rho$ is a function, $A=\rho(\rho^{-1}(A))$ is automatically
satisfied, since $\rho$ is surjective. Generalizing this to multi-functions,
we have render explicit this condition. 

In this paper, as we focus on second countable topological spaces,
the topology of each space we consider is determined by converging
sequences, and thus by the \emph{limit space} induced by the topology.
However, when working with multi-representations, we need the notion
of \emph{sequentially continuous multi-representation} even when focusing
solely on second countable spaces. 
\begin{defiC}
[{\cite[Section 2.4.4]{Schroeder2003}}]A multi-function $F:X\rightrightarrows Y$
is called \emph{sequentially continuous} if for every sequence $(x_{n})\in X^{\mathbb{N}}$
that converges to a point $x_{\infty}$, every sequence $(y_{n})\in Y^{\mathbb{N}}$
with $y_{n}\in F(x_{n})$ for all $n$, and every $y_{\infty}\in F(x_{\infty})$,
$(y_{n})$ converges to $y_{\infty}$. 
\end{defiC}

The reason why we need this definition even in a sequential setting
is the following. Consider the degenerate multi-function $F:\mathbb{N}^{\mathbb{N}}\rightrightarrows\mathbb{N}^{\mathbb{N}}$
which maps every point to $\mathbb{N}^{\mathbb{N}}$. The preimage
of any open set by $F$ is open in Baire space, and the preimage of
any closed set is closed (thus $F$ is both \emph{lower} and \emph{upper
semi--continuous}). However, there is no continuous translation between
$F$ (seen as a multi-representation) to the usual identity representation
of Baire space. This comes from the fact that while $F$ is in a sense
continuous, the final topology of $F$ is still coarser than the Baire
space topology. This cannot happens for single valued functions: a
function $F:\mathbb{N}^{\mathbb{N}}\rightarrow\mathbb{N}^{\mathbb{N}}$
is continuous if and only if the final topology of $F$ is finer than
the topology of Baire space. 

\begin{defiC}
[\cite{Schroeder2003}]Let $(X,\mathcal{T})$ be a sequential topological
space. A multi-representation $\rho:\subseteq\mathbb{N}^{\mathbb{N}}\rightrightarrows X$
of $(X,\mathcal{T})$ is called \emph{admissible} if it is a maximal
sequentially continuous multi-representation: $\rho$ is sequentially
continuous, and for any sequentially continuous multi-representation
$\phi:\subseteq\mathbb{N}^{\mathbb{N}}\rightrightarrows X$ of $X$
we have $\phi\le_{t}\rho.$
\end{defiC}

Note that the following lemma guarantees that when we consider a $\text{T}_{0}$-space,
the above supremum can be taken only over continuous representations.
\begin{lem}
[{\cite[Lemma 2.4.6]{Schroeder2003}}] \label{lem: T0 and multifunction}If
a multi-function $F:X\rightrightarrows Y$ towards a $\text{T}_{0}$
space is sequentially continuous, it is in fact a single valued function. 
\end{lem}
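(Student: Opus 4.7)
The plan is to show that if $y_{1},y_{2}\in F(x)$ for some $x\in X$, then $y_{1}$ and $y_{2}$ are topologically indistinguishable in $Y$, and then invoke the $\text{T}_{0}$ property to conclude $y_{1}=y_{2}$.

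First I would fix an arbitrary $x\in X$ and pick two points $y_{1},y_{2}\in F(x)$. The key trick is to feed sequential continuity the trivial input: apply it to the constant sequence $(x_{n})_{n\in\mathbb{N}}$ with $x_{n}=x$, which converges to $x_{\infty}=x$ in any topology on $X$. Choose the companion sequence $(y_{n})_{n\in\mathbb{N}}$ to be constant equal to $y_{1}$, which satisfies $y_{n}\in F(x_{n})=F(x)$ for every $n$. Since $y_{2}\in F(x_{\infty})$, sequential continuity forces $(y_{n})$ to converge to $y_{2}$; in other words the constant sequence equal to $y_{1}$ converges to $y_{2}$. Unpacking the definition of convergence, this says that every open neighborhood of $y_{2}$ in $Y$ contains $y_{1}$.

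Swapping the roles of $y_{1}$ and $y_{2}$ in the same argument (using the constant sequence $y_{n}=y_{2}$ and the limit candidate $y_{\infty}=y_{1}\in F(x)$), we obtain symmetrically that every open neighborhood of $y_{1}$ contains $y_{2}$. Thus $y_{1}$ and $y_{2}$ have exactly the same open neighborhoods, which means they are topologically indistinguishable. By the defining property of a $\text{T}_{0}$ space, this implies $y_{1}=y_{2}$, so $F(x)$ contains at most one point; combined with the ambient assumption that $F$ is a multi-function (so $F(x)$ is nonempty on its domain), $F$ is single valued.

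I do not anticipate a genuine obstacle: the only subtle point is recognizing that sequential continuity of a multi-function, when applied to \emph{constant} sequences, already forces pairwise indistinguishability of all values at a single point. Once one sees this, the $\text{T}_{0}$ hypothesis closes the argument immediately. No assumption on the topology of $X$ is needed, and no use of the "sequential space" hypothesis on $Y$ is required for this step either.
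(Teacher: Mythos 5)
Your proof is correct: the paper itself gives no argument for this lemma (it is quoted from Schr\"oder's dissertation, Lemma 2.4.6), and your use of constant sequences together with the definition of sequential continuity for multi-functions is exactly the standard argument one would expect there. The only point worth stating explicitly is the unpacking of ``the constant sequence $(y_{1},y_{1},\dots)$ converges to $y_{2}$'' as ``every open neighborhood of $y_{2}$ contains $y_{1}$'', which you do correctly, so nothing is missing.
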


\subsection{\label{subsec:Sierpi=000144ski-representation-of}Sierpi\'{n}ski
representation of open sets}

Associated to every multi-represented space $(X,\rho)$ is a representation
of the open sets of the final topology of $\rho$, called the Sierpi\'{n}ski
representation \cite{Schroeder2003}, denoted by $[\rho\rightarrow\rho_{\mathfrak{Si}}]$
as in \cite{Schroeder2003}. See also \cite{Pauly2016}. Note that
even when $\rho$ is a multi-representation, $[\rho\rightarrow\rho_{\mathfrak{Si}}]$
remains an actual representation. 

This representation is defined as follows: $[\rho\rightarrow\rho_{\mathfrak{Si}}](p)=O$
if and only if $p$ encodes a pair $(n,q)$, where $n\in\mathbb{N}$
and $q\in\mathbb{N}^{\mathbb{N}}$, and $n$ is the code of a Type
2 Turing machine which, when run on input $f\in\text{dom}(\rho)$
using $q$ as an oracle, will stop if and only if $\rho(f)\in O$.
For more details, see \cite{Schroeder2003}.

A subset of $X$ is called \emph{c.e. open} if it is a computable
point of the Sierpi\'{n}ski representation. In other words, it is
simply a $\rho$-semi-decidable set. 

\subsection{Numberings}

A numbering of a set $X$ is a partial surjection $\nu:\subseteq\mathbb{N}\rightarrow X$.
This can equivalently be seen as a representation whose domain is
a subset of the set of constant sequences, and thus the notion of
computable function and the order $\le$ defined in the previous section
can also be applied to numberings.

We say that $X$ is \emph{$\nu$-computably enumerable }if there is
a c.e. subset $A$ of $\mathbb{N}$ such that $X=\nu(A)$. This is
one of several possible notions that formalize the idea of being ``effectively
countable'', and probably the more commonly considered one. 

Supposing that a numbering $\nu$ has domain $\mathbb{N}$, or has
a c.e. domain, in particular implies that the numbered set is $\nu$-computably
enumerable. Applied to numbered bases, this gives a notion of effective
second countability. 

\subsection{Basis induced by a subbasis and induced strong inclusion }

Fix a set $X$, and denote by $(\mathfrak{B},\beta)$ a numbered subbasis
for $X$. The \emph{induced numbered basis }is a pair \emph{$(\hat{\mathfrak{B}},\hat{\beta})$}:
$\hat{\mathfrak{B}}$ is the set of finite intersections of elements
of $\mathfrak{B}$, and $\hat{\beta}$ is a numbering defined as follows.
Denote by $\Delta$ the standard numbering of finite subsets of $\mathbb{N}$.
Then put:
\[
\text{dom}(\hat{\beta})=\{n\in\mathbb{N},\,\Delta_{n}\subseteq\text{dom}(\beta)\};
\]
\[
\forall n\in\text{dom}(\hat{\beta}),\,\hat{\beta}(n)=\bigcap_{B\in\beta(\Delta_{n})}B.
\]

When $(\mathfrak{B},\beta)$ is equipped with a strong inclusion $\mathring{\subseteq}$,
this strong inclusion can naturally be extended to $(\hat{\mathfrak{B}},\hat{\beta})$,
as follows:
\[
\forall n_{1},n_{2}\in\text{dom}(\hat{\beta}),n_{1}\mathring{\subseteq}n_{2}\iff\forall k\in\Delta_{n_{2}},\thinspace\exists p\in\Delta_{n_{1}},\,p\mathring{\subseteq}k.
\]
One easily check that this extension preserves the conditions of being
a strong inclusion relation, and that if $\mathring{\subseteq}$ was
reflexive, the extension remains reflexive. 
\begin{rem}
In the present article, we often consider a numbered subbasis $(\mathfrak{B},\beta)$
which induces a basis $(\hat{\mathfrak{B}},\hat{\beta})$, and a strong
inclusion relation $\mathring{\subseteq}$ for $(\hat{\mathfrak{B}},\hat{\beta})$
-and not for $(\mathfrak{B},\beta)$. The reason for this is as follows.
As shown above, every strong inclusion relation for $(\mathfrak{B},\beta)$
induces a strong inclusion relation on $(\hat{\mathfrak{B}},\hat{\beta})$.
However, not every strong inclusion for $(\hat{\mathfrak{B}},\hat{\beta})$
needs to come from a strong inclusion of $(\mathfrak{B},\beta)$.
In particular, the ``set inclusion strong relation'' $n_{1}\mathring{\subseteq}n_{2}\iff\beta(n_{1})\subseteq\beta(n_{2})$
on $(\mathfrak{B},\beta)$ does not induce the set inclusion strong
relation on $(\hat{\mathfrak{B}},\hat{\beta})$. And the relation
$n_{1}\mathring{\subseteq}n_{2}\iff\hat{\beta}(n_{1})\subseteq\hat{\beta}(n_{2})$
for $(\hat{\mathfrak{B}},\hat{\beta})$ does not have to be induced
by a strong inclusion of $(\mathfrak{B},\beta)$. 

Our setting is thus made more general by allowing any strong inclusion
relation on $(\hat{\mathfrak{B}},\hat{\beta})$, and not only those
that come from strong inclusions of the subbasis. 
\end{rem}

\subsection{Computable topological spaces }

We now introduce here the notion of ``computable topological space''
that was introduced by Weihrauch and Grubba in \cite{Weihrauch2009ElementaryCT}.
It is a special case of a definition of Bauer: \cite[Definition 5.4.2]{Bauer2000}. 

We want to note here that the term ``computable topological space''
is not an appropriate name for this notion, since it is a notion of
\emph{computable basis}, and not the definition of a computable topological
space. Furthermore, it is only one amongst several possible notions
of computable basis, and not the most general one one can think of.
For instance it does not apply to all non-computably separable computable
metric spaces. 

Denote by $W_{i}=\text{dom}(\varphi_{i})$ the usual numbering of
r.e. subsets of $\mathbb{N}$. 
\begin{defiC}
[\cite{Weihrauch2009ElementaryCT}]\label{def:Usual Definition }A
\emph{``computable topological space''} is a triple $(X,\mathcal{B},\beta)$,
where $X$ is a set, $\mathcal{B}$ is a topological basis on $X$
that makes of it a $T_{0}$ space, and $\beta:\mathbb{N}\rightarrow\mathcal{B}$
is a total surjective numbering of $\mathcal{B}$, for which there
exists a computable function $f:\mathbb{N}^{2}\rightarrow\mathbb{N}$
of such that for any $i$, $j$ in $\mathbb{N}$: 
\[
\beta(i)\cap\beta(j)=\underset{k\in W_{f(i,j)}}{\bigcup}\beta(k).
\]
\end{defiC}

Note that the requirement that $\beta$ be total can be read as imposing
that $(X,\mathcal{B},\beta)$ be \emph{computably second countable}.
This is a requirement one might want to do away with. 

A possible better name for the notion above would be that of a \emph{Lacombe
basis}. In particular, if we do not ask $\beta$ to be total, and
if we do not suppose that $X$ will be $T_{0}$, then the conditions
imposed on the basis are exactly \emph{the necessary and sufficient
conditions in order for the Lacombe sets}\footnote{Lacombe sets are computable union of basic open sets, this name goes
back to Lachlan \cite{Lachlan1964} and Moschovakis \cite{Moschovakis1964}.} \emph{to form a computable topology}: so that finite intersection
and computable unions be computable. 

Associated to a ``computable topological space'' is a representation
of open sets: 
\[
\rho_{(\mathfrak{B},\beta)}(f)=\underset{\{n,\,\exists p\in\mathbb{N},\,f(p)=n+1\}}{\bigcup}\beta(n).
\]
(In the above, if $f=0^{\omega}$, then it is a name of the empty
set.) 

The condition of Definition \ref{def:Usual Definition } are sufficient
in order for finite intersections and countable unions to be computable
for $\rho_{(\mathfrak{B},\beta)}$, and, again, removing the condition
that $X$ be $T_{0}$ and that $\beta$ be total, we obtain necessary
and sufficient conditions. 

The representation $\rho_{\beta}^{\max}$ has been up to now often
associated to the above definition of a computable basis. One of the
purposes of this article is to show that the notion of ``computable
topological space'' described above, while very relevant to the study
of the associated representation $\rho_{(\mathfrak{B},\beta)}$ of
open sets, is not relevant to the study of the representations $\rho_{\beta}^{\min}$,
$\rho_{\beta}^{\max}$ and $\rho_{\beta}^{\mathring{\subseteq}}$
associated to a numbered basis. In particular, none of the conditions
of Definition \ref{def:Usual Definition } are useful in showing that
$\rho_{\beta}^{\min}$, $\rho_{\beta}^{\max}$ and $\rho_{\beta}^{\mathring{\subseteq}}$
are admissible representations (or admissible multi-representations,
if we allow non $T_{0}$-spaces). 

\subsection{\label{subsec:Computable-metric-spaces}Computable metric spaces }

The following is the common definition for computable metric spaces.
\begin{defiC}
[\cite{Weihrauch2003,Brattka2003}] A\emph{ computable metric space
}(CMS) is a quadruple $(X,A,\nu,d)$, where $(X,d)$ is a metric space,
$A$ is a countable and dense subset of $X$, $\nu:\mathbb{N}\rightarrow A$
is a total numbering of $A$, and such that the metric $d:A\times A\rightarrow\mathbb{R}_{c}$
is $(\nu\times\nu,c_{\mathbb{R}})$-computable. 
\end{defiC}

The following older definition is in fact more general. It can be
seen as the generalization of Moschovakis' notion of a \emph{recursive
metric space, }that comes from \cite{Moschovakis1964} and\emph{ }which
concerns only countable spaces, to a Type 2 setting. (Indeed, not
every recursive metric space in the sense of \cite{Moschovakis1964}
is a computable metric space.)
\begin{defiC}
[\cite{Hertling1996}] A \emph{non-necessarily effectively
separable computable metric space} is a quadruple $(X,A,\nu,d)$,
where $(X,d)$ is a metric space, $A$ is a countable and dense subset
of $X$, $\nu:\subseteq\mathbb{N}\rightarrow A$ is a partial numbering
of $A$, and such that the metric $d:A\times A\rightarrow\mathbb{R}_{c}$
is $(\nu\times\nu,c_{\mathbb{R}})$-computable. 
\end{defiC}

The existence of constructively non separable metric spaces has been
known for a long time. In \cite{Slisenko1972}, Slisenko constructs,
in the context of countable numbered sets, a non-computably separable
metric space which cannot be embedded in a computably separable one.
A more conceptual proof of this result was given by Weihrauch in \cite{Weihrauch2013}:
any CMS satisfies a strong form of effective regularity, known as
$\text{SCT}_{3}$, and non-separable computable metric spaces, while
always computably regular ($\text{CT}_{3}$), do not have to be strongly
regular. But because $\text{SCT}_{3}$ is a property inherited by
subsets, the example of a non-computably separable metric space that
satisfies $\text{CT}_{3}$ but not $\text{SCT}_{3}$ \cite[Example 5.4]{Weihrauch2013}
is a non-computably separable metric space that does not embed into
a CMS. 

Non-computably separable spaces naturally arise, see the author's
paper \cite{RauzyV}, which motivated the present note. Note also
that, when applying the Schröder Metrization theorem \cite{Schroeder1998,Weihrauch2013}
to a represented space, one builds a computable metric, but there
is no guarantee that the resulting space will indeed be computably
separable, and an actual computable metric space. 

Denote by $c_{\mathbb{Q}}$ the usual numbering of $\mathbb{Q}$,
which is total. Associated to a (non-necessarily effectively separable)
computable metric space $(X,A,\nu,d)$ is a numbering $\beta$ of
open balls centered at points of $A$: 
\[
\text{dom}(\beta)=\{\langle n,m\rangle\in\mathbb{N},\,n\in\text{dom}(\nu),\,c_{\mathbb{Q}}(m)>0\},
\]
\[
\forall\langle n,m\rangle\in\text{dom}(\beta),\,\beta(\langle n,m\rangle)=B(\nu(n),c_{\mathbb{Q}}(m)).
\]
Note that when $\nu$ is total, $\beta$ has a recursive domain, we
can then suppose that it is total. 

The are two natural strong inclusion relations on metric spaces: 
\[
\langle n_{1},m_{1}\rangle\mathring{\subseteq}\langle n_{2},m_{2}\rangle\iff d(\nu(n_{1}),\nu(n_{2}))+c_{\mathbb{Q}}(m_{1})<c_{\mathbb{Q}}(m_{2});
\]
\[
\langle n_{1},m_{1}\rangle\mathring{\subseteq}\langle n_{2},m_{2}\rangle\iff d(\nu(n_{1}),\nu(n_{2}))+c_{\mathbb{Q}}(m_{1})\le c_{\mathbb{Q}}(m_{2}).
\]
The first one has the advantage of being semi-decidable, the second
one of being reflexive, both have found use in the literature \cite{Weihrauch1981,Spr98}.
But in fact, these two different relations induce the same notion
of strong neighborhood basis, and thus they can be studied interchangeably
when studying the representation $\rho_{\beta}^{\mathring{\subseteq}}$. 

Metric spaces come equipped with the Cauchy representation $\rho_{Cau}$:
\begin{multline*} \text{dom}(\rho_{Cau}) = \{ p \in \text{dom}(\nu)^{\mathbb{N}},\ \forall i > j, \\ d(\nu(p(i)), \nu(p(j))) < 2^{-j},\ \exists x \in X,\ x = \lim_{i \to +\infty} \nu(p(i)) \} \end{multline*}
\[
\forall p\in\text{dom}(\rho_{Cau}),\,\rho_{Cau}(p)=\lim_{i\rightarrow+\infty}\nu(p(i)).
\]

\section{\label{sec:Admissibility-theorem}Admissibility theorem }

In this section, we prove that all multi-representations $\rho_{\beta}^{\min}$,
$\rho_{\beta}^{\max}$ and $\rho_{\beta}^{\mathring{\subseteq}}$
are admissible. 
\begin{prop}
For any numbered set $(\mathcal{B},\beta)$ of subsets of a set $X$,
and any strong inclusion relation $\mathring{\subseteq}$ for $(\mathcal{B},\beta)$,
the three multi-representations $\rho_{\beta}^{\min}$, $\rho_{\beta}^{\max}$
and $\rho_{\beta}^{\mathring{\subseteq}}$ are equivalent modulo continuous
translations. 
\end{prop}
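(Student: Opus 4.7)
The plan is to close the circle $\rho_\beta^{\max} \le \rho_\beta^{\mathring{\subseteq}} \le \rho_\beta^{\min} \le_t \rho_\beta^{\max}$. The first two inequalities, witnessed by the identity on Baire space, are already recorded in the introduction and are \emph{a fortiori} continuous, so the only thing left to establish is the continuous translation $\rho_\beta^{\min} \le_t \rho_\beta^{\max}$.

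The key step will be the following characterization: for any $f \in \text{dom}(\rho_\beta^{\min})$ with $x \in \rho_\beta^{\min}(f)$, and any $n \in \text{dom}(\beta)$,
\[
x \in \beta(n) \iff \exists i_1, \ldots, i_k \in \mathbb{N},\ \beta(f(i_1)) \cap \cdots \cap \beta(f(i_k)) \subseteq \beta(n).
\]
The left-to-right implication is precisely the second clause in the definition of $\rho_\beta^{\min}$ applied to $B = \beta(n)$; the right-to-left implication holds because each $\beta(f(i_j))$ already contains $x$ by the first clause, so the intersection does too, and hence so does $\beta(n)$.

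From this characterization I would build a continuous realizer as follows. The predicate
\[
P(m_1, \ldots, m_k, n) \equiv \bigl(\{m_1, \ldots, m_k, n\} \subseteq \text{dom}(\beta) \text{ and } \beta(m_1) \cap \cdots \cap \beta(m_k) \subseteq \beta(n)\bigr)
\]
depends only on the fixed numbered subbasis $(\mathfrak{B}, \beta)$, not on the point $x$ or on the input $f$. It can therefore be encoded into a single oracle $\mathcal{O} \in \mathbb{N}^{\mathbb{N}}$. The realizer, given the input $f$ and the oracle $\mathcal{O}$, systematically enumerates tuples $\bigl((i_1, \ldots, i_k), n\bigr)$ and outputs $n$ whenever $P(f(i_1), \ldots, f(i_k), n)$ is witnessed. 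By the characterization above, the image of the resulting sequence equals exactly $\{n \in \text{dom}(\beta) : x \in \beta(n)\}$, which is by definition the image of a $\rho_\beta^{\max}$-name of $x$. Since any $\mathcal{O}$-computable map on Baire space is continuous, this yields the desired reduction.

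The main obstacle I anticipate is precisely that set inclusion among the $\beta(m)$'s is not assumed to be semi-decidable, so this reduction is in general genuinely not \emph{computable}---only continuous. This is why the proposition is stated modulo continuous, rather than computable, translation; obtaining computable equivalence would require additional data on the basis, such as a c.e. strong inclusion relation that witnesses enough of these set inclusions, which is exactly the kind of hypothesis examined later in the paper.
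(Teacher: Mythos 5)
Your proof is correct and follows essentially the same route as the paper: the chain $\rho_{\beta}^{\max}\le\rho_{\beta}^{\mathring{\subseteq}}\le\rho_{\beta}^{\min}$ via the identity realizer, plus an oracle-assisted enumeration establishing $\rho_{\beta}^{\min}\le_{t}\rho_{\beta}^{\max}$. If anything, your version is slightly more careful than the paper's, since your predicate $P$ explicitly tests inclusion of \emph{finite intersections} of sets from the name (as the second clause of the definition of $\rho_{\beta}^{\min}$ requires), whereas the paper's phrasing mentions only single-set inclusions.
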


\begin{proof}
As remarked in the introduction, we always have $\rho_{\beta}^{\max}\le\rho_{\beta}^{\mathring{\subseteq}}\le\rho_{\beta}^{\min}.$We
thus prove that $\rho_{\beta}^{\min}\le_{t}\rho_{\beta}^{\max}$. 

With a powerful enough oracle, the inclusion can be decided on $\text{dom}(\beta)$
(i.e. the relation $R$ defined by $nRm\iff\beta(n)\subseteq\beta(m)$).
Also, a powerful enough oracle can enumerate $\text{dom}(\beta)$.
With such an oracle, we can, given the $\rho_{\beta}^{\min}$-name
of a point $x$, enumerate in parallel all names of balls that contain
a ball that contains $x$, this will precisely give a $\rho_{\beta}^{\max}$-name
of $x$. 
\end{proof}
\begin{thm}
\label{thm:Admissibility }For any numbered set $(\mathcal{B},\beta)$
of subsets of a set $X$, and any strong inclusion relation $\mathring{\subseteq}$
for $(\hat{\mathcal{B}},\hat{\beta})$ for which points have strong
neighborhood bases, the three multi-representations $\rho_{\beta}^{\min}$,
$\rho_{\beta}^{\max}$ and $\rho_{\beta}^{\mathring{\subseteq}}$
are admissible with respect to the topology of $X$ generated by $\mathcal{B}$
(as a subbasis). 
\end{thm}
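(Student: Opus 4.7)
The plan is to invoke the proposition just proved, which shows $\rho_{\beta}^{\min}$, $\rho_{\beta}^{\max}$ and $\rho_{\beta}^{\mathring{\subseteq}}$ are continuously equivalent: admissibility is invariant under continuous equivalence, so it suffices to prove admissibility for $\rho_{\beta}^{\min}$. I pick this one because its second defining clause matches the subbasic structure directly. Writing $\mathcal{T}$ for the topology generated by $\mathfrak{B}$, I would then split the argument into (i) sequential continuity of $\rho_{\beta}^{\min}$ into $(X,\mathcal{T})$, and (ii) a continuous reduction $\phi\le_{t}\rho_{\beta}^{\min}$ for every sequentially continuous multi-representation $\phi$ of $(X,\mathcal{T})$.

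For (i), assume $f_{n}\to f_{\infty}$ in Baire space, $x_{n}\in\rho_{\beta}^{\min}(f_{n})$, $x_{\infty}\in\rho_{\beta}^{\min}(f_{\infty})$, and fix a $\mathcal{T}$-open $U\ni x_{\infty}$. Because $\mathfrak{B}$ is a subbasis, $x_{\infty}\in B_{1}\cap\cdots\cap B_{k}\subseteq U$ for some $B_{i}\in\mathfrak{B}$. Applying the second clause of $\rho_{\beta}^{\min}$ to $f_{\infty}$ and each $B_{i}$ gives a finite index set $I\subset\mathbb{N}$ with $\bigcap_{j\in I}\beta(f_{\infty}(j))\subseteq B_{1}\cap\cdots\cap B_{k}\subseteq U$. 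For $n$ large enough $f_{n}$ and $f_{\infty}$ agree on $I$, and the first clause of $\rho_{\beta}^{\min}$ applied to $f_{n}$ then forces $x_{n}\in\bigcap_{j\in I}\beta(f_{n}(j))\subseteq U$. Hence $x_{n}\to x_{\infty}$.

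For (ii), the key step, I would construct $F$ by a cone enumeration. Let
\[
S=\{(\sigma,m)\in\mathbb{N}^{<\mathbb{N}}\times\text{dom}(\beta)\mid\phi(g)\subseteq\beta(m)\text{ for every }g\in\text{dom}(\phi)\cap[\sigma]\},
\]
where $[\sigma]$ denotes the cylinder on $\sigma$ in Baire space, and define $F(f)$ to dovetail, over prefixes $\sigma=f\restriction n$, every $m$ with $(\sigma,m)\in S$. Each output digit depends only on a finite prefix of $f$, so $F$ is continuous. For $f\in\text{dom}(\phi)$ and $x\in\phi(f)$: if $(f\restriction n,m)\in S$ then $g=f$ yields $x\in\phi(f)\subseteq\beta(m)$, which is the first clause of $\rho_{\beta}^{\min}$. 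For the second clause, given $B\in\mathfrak{B}$ with $x\in B$, pick $m_{0}\in\text{dom}(\beta)$ with $\beta(m_{0})=B$; I claim $m_{0}$ is enumerated by $F(f)$, in which case the singleton $\{m_{0}\}$ supplies the required finite intersection $\beta(m_{0})=B\subseteq B$. If not, then for every $n$ there exist $g_{n}\in\text{dom}(\phi)\cap[f\restriction n]$ with $\phi(g_{n})\not\subseteq B$ and $y_{n}\in\phi(g_{n})\setminus B$; but $g_{n}\to f$, so sequential continuity of $\phi$ forces $y_{n}\to x$, and $B$ being open with $x\in B$ yields $y_{n}\in B$ eventually, a contradiction.

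The main obstacle is precisely this non-enumeration step: it is where the interplay between the global condition defining $S$ and the sequential continuity of $\phi$ happens, and it is crucial both that each $B\in\mathfrak{B}$ is open in $\mathcal{T}$ (so that $y_{n}\to x\in B$ gives $y_{n}\in B$) and that $B$ itself admits a $\beta$-name (so that the singleton intersection works). A minor technical wrinkle, to be handled by padding $F(f)$ with a fixed dummy $\beta$-name when $\text{dom}(\beta)\neq\emptyset$ (the only non-trivial case), is to ensure $F(f)\in\text{dom}(\rho_{\beta}^{\min})$ even formally, outside $\text{dom}(\phi)$.
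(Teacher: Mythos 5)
Your proof is correct, and its first move coincides exactly with the paper's: reduce to $\rho_{\beta}^{\min}$ via the preceding proposition that the three multi-representations are continuously equivalent (admissibility being invariant under $\equiv_{t}$). Where you diverge is that the paper then simply cites Schr\"oder's theorem that $\rho_{\beta}^{\min}$ is always admissible (Proposition 3.1.6 and Lemma 3.1.10 of his dissertation), whereas you reprove that result from scratch: sequential continuity by passing from a $\mathcal{T}$-open $U$ to a finite subbasic intersection and using convergence in Baire space on a finite index set, and maximality via the cone relation $S$ and the observation that if $(f\restriction n,m_{0})\notin S$ for all $n$, sequential continuity of $\phi$ together with openness of $\beta(m_{0})$ yields a contradiction. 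This is essentially Schr\"oder's own argument, so the mathematical content is the same; what your version buys is self-containedness and a visible accounting of exactly which hypotheses are used (openness of the subbasic sets, the singleton intersection in the second clause of $\rho_{\beta}^{\min}$), at the cost of length. One peripheral remark of yours is off: the padding of $F(f)$ with a fixed dummy $\beta$-name is both unnecessary --- a continuous realizer for $\mathrm{id}_{X}$ only needs to be defined and correct on $\mathrm{dom}(\phi)$, so there is no obligation that $F$ land in $\mathrm{dom}(\rho_{\beta}^{\min})$ elsewhere --- and potentially harmful if applied on $\mathrm{dom}(\phi)$, since the dummy set need not contain the point being named and would then violate the first clause. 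The core argument does not depend on this remark and stands without it.
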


\begin{proof}
This is an immediate corollary of the previous result, together with
the theorem of Schröder which states that $\rho_{\beta}^{\min}$ is
always admissible. See in particular in \cite{Schroeder2003}: Proposition
3.1.6, for the case of limit spaces, and Lemma 3.1.10 for the transfer
of this result to topological spaces. 
\end{proof}
Note that in cases where the topology generated by $(\mathcal{B},\beta)$
satisfies the $T_{0}$ axiom, and $\rho_{\beta}^{\min}$, $\rho_{\beta}^{\max}$
and $\rho_{\beta}^{\mathring{\subseteq}}$ are representations instead
of multi-representations, the above result follows from classical
results of Weihrauch \cite{Weihrauch1987a,Weihrauch2000} who shows
that $\rho_{\beta}^{\max}$ is admissible. And in fact, one can also
deduce Theorem \ref{thm:Admissibility } from Weihrauch's result,
together with the following lemma, suggested to us by an anonymous
referee. Recall that on a topological space $(X,\mathcal{T})$, we
define an equivalence relation $\sim$ by: 
\[
a\sim b\iff\forall O\in\mathcal{T},\,a\in O\iff b\in O.
\]
 The quotient $X/\sim$, that comes equipped with the quotient topology,
is called the Kolmogorov quotient of $(X,\mathcal{T})$. 
\begin{lem}
Let $(X,\rho)$ be a multi-represented space with final topology $\mathcal{T}$.
Consider the Kolmogorov quotient $X/\sim$ of $X$, and the quotient
representation $\tilde{\rho}$. Then $\tilde{\rho}$ is admissible
if and only if $\rho$ is. 
\end{lem}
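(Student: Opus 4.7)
The plan is to use the Kolmogorov quotient map $q \colon X \to X/{\sim}$ as a ``sequential homeomorphism'' --- it both preserves and reflects sequential convergence --- and then transport sequentially continuous multi-representations back and forth along $q$ and $q^{-1}$.

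The first step is the key topological input: by the very definition of $\sim$, every open set of $(X, \mathcal{T})$ is $\sim$-saturated, so $\mathcal{T}$ coincides with the initial topology with respect to $q$. A direct consequence is that for any sequence $(y_n)_n$ in $X$ and any $y \in X$, $y_n \to y$ in $X$ if and only if $q(y_n) \to q(y)$ in $X/{\sim}$; from this one readily deduces that a multi-representation $\phi$ of $X$ is sequentially continuous if and only if $q \circ \phi$ is sequentially continuous as a multi-representation of $X/{\sim}$. In particular, $\rho$ and $\tilde \rho$ are simultaneously sequentially continuous, which settles the sequential-continuity clause of admissibility. One may additionally note, using Lemma~\ref{lem: T0 and multifunction} applied to the $T_0$ space $X/{\sim}$, that $\tilde \rho$ and every sequentially continuous multi-representation of $X/{\sim}$ are automatically single-valued.

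For the direction $(\Rightarrow)$, assume $\rho$ admissible. Given any sequentially continuous $\tilde \phi$ of $X/{\sim}$, I would lift it to $\phi(f) := q^{-1}(\tilde \phi(f))$, a multi-representation of $X$ (surjectivity lifts along $q$) that is sequentially continuous because $q \circ \phi = \tilde \phi$ is. Admissibility of $\rho$ produces a continuous $F$ witnessing $\phi(f) \subseteq \rho(F(f))$; applying $q$ yields $\tilde \phi(f) = q(\phi(f)) \subseteq q(\rho(F(f))) = \tilde \rho(F(f))$, which is the desired translation $\tilde \phi \leq_t \tilde \rho$.

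For the direction $(\Leftarrow)$, given a sequentially continuous $\phi$ on $X$, I would push forward to $\tilde \phi := q \circ \phi$, sequentially continuous on $X/{\sim}$, and apply admissibility of $\tilde \rho$ to obtain a continuous $F$ with $q(\phi(f)) \subseteq q(\rho(F(f)))$. The main obstacle is to upgrade this $\sim$-modular containment to the pointwise containment $\phi(f) \subseteq \rho(F(f))$ in $X$ itself, since in general $y \sim z$ with $z \in \rho(F(f))$ does not force $y \in \rho(F(f))$. This step is immediate as soon as the values of $\rho$ are $\sim$-saturated (that is, unions of $\sim$-classes); and this $\sim$-saturation holds for the multi-representations $\rho_\beta^{\min}$, $\rho_\beta^{\max}$ and $\rho_\beta^{\mathring\subseteq}$ of interest in the paper, because basic open sets are $\sim$-saturated and the defining clauses of these multi-representations involve only membership in such sets. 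Handling this $\sim$-saturated case cleanly --- which is the situation in which the lemma is applied inside the proof of Theorem~\ref{thm:Admissibility } --- is therefore the key remaining step.
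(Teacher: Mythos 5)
Your argument follows the paper's proof step for step. For ``$\rho$ admissible $\Rightarrow$ $\tilde\rho$ admissible'' the paper likewise pulls a continuous representation $\tilde\tau$ of $X/\sim$ back to the sequentially continuous multi-representation $q^{-1}\circ\tilde\tau$, applies admissibility of $\rho$, and pushes forward with $q$ again. For ``$\tilde\rho$ admissible $\Rightarrow$ $\rho$ admissible'' the paper likewise forms $q\circ\tau$, invokes Lemma~\ref{lem: T0 and multifunction} to see it is single valued, obtains a continuous $h$ with $q\circ\tau\circ h=\tilde\rho$, and must then upgrade ``$y\sim z$ for all $y\in\tau(h(f))$, $z\in\rho(f)$'' to $\tau(h(f))\subseteq\rho(f)$. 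The one step you leave open is exactly the step the paper settles in a single sentence: \emph{``by definition of the final topology of $\rho$, if $y\sim z$ and $z\in\rho(x)$, then $y\in\rho(x)$''}, i.e.\ the paper asserts that the values of $\rho$ are $\sim$-saturated.

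You are right to be uneasy there, and your analysis of that step is more careful than the paper's. What the definition of the final topology (a set $A$ is open iff $A=\rho(\rho^{-1}(A))$ and $\rho^{-1}(A)$ is open) actually gives is the \emph{reverse} inclusion: any open set meeting a value $\rho(f)$ must contain it, so each $\rho(f)$ is contained in a single $\sim$-class; it does not follow that $\rho(f)$ contains the whole class. Concretely, take $X=\{y,z\}$ and let $\rho$ take only the values $\{y\}$ and $\{z\}$ on two complementary non-open subsets of Baire space: the final topology is indiscrete, so $y\sim z$ and $X/\sim$ is a singleton (making $\tilde\rho$ trivially admissible), yet the sequentially continuous multi-representation $\tau(f)=\{y,z\}$ does not translate to $\rho$, so $\rho$ is not admissible. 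The saturation hypothesis you isolate is therefore genuinely needed and is not supplied by the definition of the final topology alone; your observation that it holds for $\rho_\beta^{\min}$, $\rho_\beta^{\max}$ and $\rho_\beta^{\mathring\subseteq}$, whose values are defined purely by membership in basic open sets and hence are unions of $\sim$-classes, is the right way to make the lemma serve its intended purpose in Theorem~\ref{thm:Admissibility }. In short: your proof is the paper's proof, minus one step that the paper claims follows ``by definition'' but which, as you suspected, requires an extra hypothesis in general.
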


\begin{proof}
Suppose first that $\tilde{\rho}$ is admissible. 

Let $\tau$ be any sequentially continuous multi-representation of
$X$. Because the quotient map $X\overset{q}{\rightarrow}X/\sim$
is sequentially continuous, $q\circ\tau$ is a sequentially continuous
multi-function whose codomain is $\text{T}_{0}$, by Lemma \ref{lem: T0 and multifunction},
$q\circ\tau$ is actually single valued. It is a continuous representation
of $X/\sim$, and thus $q\circ\tau\le_{t}\tilde{\rho}$, i.e. there
is a continuous function $h$ so that $q\circ\tau\circ h=\tilde{\rho}$
on $\text{dom}(\rho)$. 

This indicates that for every $x\in\text{dom}(\rho)$, every $y\in\tau\circ h(x)$
and every $z$ in $\rho(x)$, $y\sim z$. But by definition of the
final topology of $\rho$, if $y\sim z$ and $z\in\rho(x)$, then
$y\in\rho(x)$. And thus $\tau\circ h(x)\subseteq\rho(x)$ for every
$x$, which indicates that $\tau\circ h\le_{t}\rho$.

Conversely, if $\rho$ is admissible, $\tilde{\rho}$ is also admissible.
Indeed, let $\tilde{\tau}:\subseteq\mathbb{N}^{\mathbb{N}}\rightarrow X/\sim$
be a continuous representation of $X/\sim$. Then $q^{-1}\circ\tilde{\tau}$
is a sequentially continuous multi-representation of $X$. By admissibility
of $\rho$, $q^{-1}\circ\tilde{\tau}\le_{t}\rho$. And $q\circ q^{-1}\circ\tilde{\tau}\le_{t}q\circ\rho$,
i.e. $\tilde{\tau}\le_{t}\tilde{\rho}$. 
\end{proof}

\section{\label{sec: Set and Subset}Compatibility of the multi-representations
of a set and of a subset }

If $(X,\rho)$ is a multi-represented set, and if $A$ is a subset
of $X$, we naturally define a multi-representation $\rho_{\vert A}$
of $A$, \emph{the restriction of $\rho$ to $A$, }by the following:
\[
\text{dom}(\rho_{\vert A})=\{f\in\text{dom}(\rho),\,\rho(f)\cap A\ne\emptyset\};
\]
\[
\forall f\in\text{dom}(\rho_{\vert A}),\,\rho_{\vert A}(f)=\rho(f)\cap A.
\]

\begin{prop}
\label{prop: restriction rpz embedding computable }If $A$ is a subset
of $X$ and $\rho$ is a multi-representation of $X$, then the embedding
$A\hookrightarrow X$ is always $(\rho_{\vert A},\rho)$-computable,
and the identity on Baire space is a realizer. 
\end{prop}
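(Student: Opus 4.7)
The plan is to unfold definitions and check that the identity on Baire space satisfies them, which is essentially immediate. The proof is purely a matter of verification.

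First I would recall that by definition of the restricted multi-representation, for every $f \in \text{dom}(\rho_{\vert A})$ we have $\rho_{\vert A}(f) = \rho(f) \cap A$, so in particular $\rho_{\vert A}(f) \subseteq \rho(f)$. The embedding $i : A \hookrightarrow X$ sends $a$ to $a$, and has $\text{dom}(i) = A$.

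Next I would take $F$ to be the identity function on $\mathbb{N}^{\mathbb{N}}$, which is computable and total, hence defined in particular on every $f \in \text{dom}(\rho_{\vert A})$ for which $\rho_{\vert A}(f) \cap \text{dom}(i) \neq \emptyset$ (a condition which is in fact automatic here since $\rho_{\vert A}(f) \subseteq A = \text{dom}(i)$ and $f \in \text{dom}(\rho_{\vert A})$ already forces $\rho_{\vert A}(f)$ to be nonempty). It then suffices to verify the defining implication: for every $a \in A$ and every $f \in \text{dom}(\rho_{\vert A})$, if $a \in \rho_{\vert A}(f)$ then $i(a) = a \in \rho(F(f)) = \rho(f)$. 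This follows at once from $\rho_{\vert A}(f) \subseteq \rho(f)$.

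There is no real obstacle: the statement is designed as a sanity check asserting that restricting a multi-representation makes the embedding free-of-charge computable, with a trivial realizer. The only point worth flagging is the technical condition about the domain of $F$ in the definition of computability for multi-functions, which I would handle explicitly to avoid any confusion about what ``defined on all $f \in \text{dom}(\rho_{\vert A})$ for which $\rho_{\vert A}(f) \cap \text{dom}(i) \neq \emptyset$'' amounts to; as noted, it is vacuously satisfied by the identity.
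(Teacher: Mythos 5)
Your verification is correct and matches the paper, whose proof simply reads ``Immediate'': the key point is exactly the inclusion $\rho_{\vert A}(f)=\rho(f)\cap A\subseteq\rho(f)$, which makes the identity on Baire space a realizer. Your explicit check of the domain condition for the realizer is a harmless elaboration of the same argument.
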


\begin{proof}
Immediate. 
\end{proof}
We now consider a numbered subbasis $(\mathfrak{B},\beta)$ for $X$
equipped with a strong inclusion relation $\mathring{\subseteq}$.
We thus have three multi-representations $\rho_{\beta}^{\min}$, $\rho_{\beta}^{\max}$
and $\rho_{\beta}^{\mathring{\subseteq}}$ of $X$. 

We can now consider a numbered subbasis $(\mathfrak{A},\alpha)$ for
$A$, defined by restriction of $(\mathfrak{B},\beta)$:
\[
\text{dom}(\alpha)=\text{dom}(\beta);
\]
\[
\forall n\in\text{dom}(\beta),\,\alpha(n)=A\cap\beta(n).
\]
In fact, $(\mathfrak{A},\alpha)$ is naturally equipped with the same
strong inclusion relation as $\beta$. Indeed, the condition $n\mathring{\subseteq}m\implies\alpha(n)\subseteq\alpha(m)$
is always valid, since 
\[
\forall n,m\in\text{dom}(\beta),\,n\mathring{\subseteq}m\implies\beta(n)\subseteq\beta(m)\implies A\cap\beta(n)\subseteq A\cap\beta(m).
\]
(Note that the new $\mathring{\subseteq}$ remains reflexive, and
points still admit strong neighborhood bases, if this was the case
for the original $\mathring{\subseteq}$.)

We then have three multi-representations of $A$: $\rho_{\alpha}^{\min}$,
$\rho_{\alpha}^{\max}$ and $\rho_{\alpha}^{\mathring{\subseteq}}$. 

Both $\rho_{\alpha}^{\max}$ and $\rho_{\alpha}^{\mathring{\subseteq}}$
are well behaved with respect to the inclusion $A\hookrightarrow X$,
but $\rho_{\alpha}^{\min}$ is not. This is what we show now. 
\begin{prop}
We have: \textup{$\rho_{\alpha}^{\max}\equiv(\rho_{\beta}^{\max})_{\vert A}$
and $\rho_{\alpha}^{\mathring{\subseteq}}\equiv(\rho_{\beta}^{\mathring{\subseteq}})_{\vert A}$.} 
\end{prop}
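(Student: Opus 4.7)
The plan is to show that both asserted equivalences are in fact literal equalities of multi-representations, with the identity on Baire space serving as a realizer in both directions. The proof is a straightforward unwinding of definitions, leveraging the fact that passing from $(\mathfrak{B},\beta)$ to $(\mathfrak{A},\alpha)$ is vacuous at the level of indices.

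The key observation I would first record is that by construction $\text{dom}(\alpha)=\text{dom}(\beta)$, and hence $\text{dom}(\hat{\alpha})=\text{dom}(\hat{\beta})$. For any $x\in A$ and any $n\in\text{dom}(\hat{\beta})$, the equivalence $x\in\hat{\alpha}(n)=A\cap\hat{\beta}(n)\iff x\in\hat{\beta}(n)$ is trivial. Moreover, the strong inclusion inherited by $(\mathfrak{A},\alpha)$ is, by the definition given just before the proposition, the exact same binary relation on $\text{dom}(\beta)$, so its canonical extension to $\text{dom}(\hat{\beta})=\text{dom}(\hat{\alpha})$ via the formula from the Preliminaries is identical for both $\hat{\beta}$ and $\hat{\alpha}$.

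With these observations in hand the unwinding is immediate. A sequence $f$ is a $\rho_{\alpha}^{\max}$-name of $x\in A$ iff $\text{Im}(f)=\{n\in\text{dom}(\alpha),\ x\in\alpha(n)\}$, which rewrites to $\text{Im}(f)=\{n\in\text{dom}(\beta),\ x\in\beta(n)\}$, i.e.\ $f$ is a $\rho_{\beta}^{\max}$-name of $x$; since $x\in A$, this says $x\in(\rho_{\beta}^{\max})_{\vert A}(f)$. The same bookkeeping applies to $\rho^{\mathring{\subseteq}}$: each of the two clauses defining a $\rho_{\alpha}^{\mathring{\subseteq}}$-name depends only on index data and on the predicate $x\in\hat{\alpha}(\cdot)$, both of which agree with their $\hat{\beta}$-counterparts for $x\in A$. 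There is no genuine obstacle; the content of the proposition lies in the contrast with $\rho^{\min}$, for which the second clause $\beta(f(n_1))\cap\dots\cap\beta(f(n_k))\subseteq B$ is a containment of ambient subsets of $X$ that is strictly weakened by intersecting with $A$, so the analogous literal equality breaks down. The only point that merits explicit mention in the write-up is the verification that the induced strong inclusion on $(\hat{\mathfrak{A}},\hat{\alpha})$ really does coincide with the one on $(\hat{\mathfrak{B}},\hat{\beta})$, which is immediate from the extension formula.
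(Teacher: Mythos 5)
Your proof is correct and is precisely the definitional unwinding that the paper leaves to the reader: the identification $\text{dom}(\hat{\alpha})=\text{dom}(\hat{\beta})$, the equivalence $x\in\hat{\alpha}(n)\iff x\in\hat{\beta}(n)$ for $x\in A$, and the coincidence of the induced strong inclusions together give literal equality of the multi-representations, with the identity on Baire space realizing both directions. Your closing remark correctly isolates why the same bookkeeping fails for $\rho^{\min}$ (the second clause there is a genuine set inclusion in the ambient space), which is exactly the contrast the surrounding propositions in the paper are drawing.
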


\begin{proof}
Left to the reader. 
\end{proof}
\begin{cor}
The embedding\textup{ $A\hookrightarrow X$ is always $(\rho_{\alpha}^{\max},\rho_{\beta}^{\max})$-computable
and $(\rho_{\alpha}^{\mathring{\subseteq}},\rho_{\beta}^{\mathring{\subseteq}})$-computable.}
\end{cor}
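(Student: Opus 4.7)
The plan is to chain together the two immediately preceding results: Proposition \ref{prop: restriction rpz embedding computable }, which handles the generic restriction $\rho_{\vert A}$, and the proposition stating the equivalences $\rho_{\alpha}^{\max}\equiv(\rho_{\beta}^{\max})_{\vert A}$ and $\rho_{\alpha}^{\mathring{\subseteq}}\equiv(\rho_{\beta}^{\mathring{\subseteq}})_{\vert A}$. Since computability of a (multi-)function between represented spaces is preserved under equivalence of the representations on either side, this is essentially a two-line argument.

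First I would write the translation $\rho_{\alpha}^{\max}\le(\rho_{\beta}^{\max})_{\vert A}$ from the preceding proposition: let $G:\subseteq\mathbb{N}^{\mathbb{N}}\rightarrow\mathbb{N}^{\mathbb{N}}$ be a computable realizer, so that for every $f\in\text{dom}(\rho_{\alpha}^{\max})$ and every $x\in\rho_{\alpha}^{\max}(f)$, we have $x\in(\rho_{\beta}^{\max})_{\vert A}(G(f))=\rho_{\beta}^{\max}(G(f))\cap A$. Then by Proposition \ref{prop: restriction rpz embedding computable }, the identity on Baire space realizes the inclusion $((\rho_{\beta}^{\max})_{\vert A},\rho_{\beta}^{\max})$, so the composite $\text{id}\circ G=G$ is a computable realizer for the embedding $A\hookrightarrow X$ with respect to $(\rho_{\alpha}^{\max},\rho_{\beta}^{\max})$. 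The argument for $\mathring{\subseteq}$ is identical, with $\mathring{\subseteq}$ replacing $\max$ throughout.

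There is no serious obstacle here — the statement is literally obtained by unfolding definitions and composing one computable translation with the identity realizer. The only minor point worth spelling out is that, because we are working with multi-representations rather than representations, the ``composition'' $\rho_{1}\le\rho_{2}$ is defined by $\rho_{1}\sqsubseteq\rho_{2}\circ h$ (as recalled after the definition of translation), so one should verify the inclusion at the level of sets: from $\rho_{\alpha}^{\max}(f)\subseteq(\rho_{\beta}^{\max})_{\vert A}(G(f))\subseteq\rho_{\beta}^{\max}(G(f))$ we read off exactly that $G$ is a realizer of the embedding. This transitivity of $\le$ for multi-representations is the only fact beyond definitions being invoked, and it is immediate.
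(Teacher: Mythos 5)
Your proposal is correct and follows exactly the paper's route: the paper's proof is the one-line citation of Proposition \ref{prop: restriction rpz embedding computable }, implicitly combined with the preceding equivalence proposition, which is precisely the composition you spell out. Your version merely makes the realizer-level composition and the multi-representation subtlety explicit.
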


\begin{proof}
By Proposition \ref{prop: restriction rpz embedding computable }. 
\end{proof}
\begin{prop}
\label{prop:The-embedding-not-computable min rpz }The embedding\textup{
$A\hookrightarrow X$ is not always $(\rho_{\alpha}^{\min},\rho_{\beta}^{\min})$-computable. }
\end{prop}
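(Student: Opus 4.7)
The plan is to exhibit a concrete counterexample exploiting the following conceptual point: a $\rho_\alpha^{\min}$-name of $x \in A$ only needs to cover those basic sets appearing in $\hat{\mathfrak{A}}$, a potentially much coarser collection than $\hat{\mathfrak{B}}$; so a point of $A$ can have a trivially computable $\alpha$-name while having no computable $\beta$-name, which immediately blocks any computable translation.

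Concretely, take $X = \{0,1\}^{\mathbb{N}}$ (Cantor space) equipped with its standard numbered basis $(\mathfrak{B}, \beta)$ of cylinder sets $C_s = \{g \in X : g \sqsupseteq s\}$ indexed by binary strings $s \in \{0,1\}^*$. Fix any non-computable $f \in \{0,1\}^{\mathbb{N}}$ and set $A = \{f\}$. Then $\alpha$ satisfies $\alpha(n) = \{f\}$ precisely when $s_n \sqsubseteq f$ and $\alpha(n) = \emptyset$ otherwise, so $\hat{\mathfrak{A}} = \{\{f\}, \emptyset\}$.

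The first step is to produce a computable $\rho_\alpha^{\min}$-name of $f$: since the only non-empty set of $\hat{\mathfrak{A}}$ containing $f$ is $\{f\}$ itself, any non-empty sequence with values in $\{n : s_n \sqsubseteq f\}$ is a valid $\alpha$-name, and the constant sequence with value the index of the empty string is such a name and is trivially computable. The second step is to observe that any $\rho_\beta^{\min}$-name of $f$ in $X$ must, for every $n$, contain in its image an index of some cylinder $C_t$ with $t \sqsupseteq f{\upharpoonright}n$: indeed $C_{f{\upharpoonright}n}$ contains $f$, and any finite intersection of cylinders listed in the name is either empty or equals the longest one among them (all listed cylinders being prefixes of $f$, hence pairwise compatible), so the cover condition forces some listed cylinder to refine $C_{f{\upharpoonright}n}$. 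Hence the image of any $\rho_\beta^{\min}$-name of $f$ lists cylinders whose defining strings are prefixes of $f$ of arbitrarily large length, from which $f$ is directly computable (read off the first $k$ symbols of any listed string of length $\geq k$).

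The third step is the conclusion: since $f$ is non-computable, $f$ has no computable $\rho_\beta^{\min}$-name. If the embedding $A \hookrightarrow X$ were $(\rho_\alpha^{\min}, \rho_\beta^{\min})$-computable, applying its realizer to the computable $\alpha$-name of step one would produce a computable $\rho_\beta^{\min}$-name of $f$, a contradiction. The only substantive verification is the cylinder-intersection analysis of step two, which is a standard and short argument; I do not anticipate a real obstacle, the whole proof being essentially a direct reduction from computability of the would-be translation to computability of the point $f$.
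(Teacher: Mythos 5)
Your proof is correct, but it takes a genuinely different route from the paper. The paper derives the proposition from its metric-space analysis: it reuses the construction of Proposition \ref{prop: discrete and not discrete}, a computable metric subspace $A=\bigcup_{n\in K}[n-1/2,n+1/2]\cup\bigcup_{n\notin K}\{n\}$ of $\mathbb{R}$ built from the halting set, for which $\rho_{\beta}^{\min}\not\le\rho_{Cau}$ while $\rho_{Cau}\equiv(\rho_{\gamma}^{\min})_{\vert A}$ for the rational-interval basis $\gamma$ of $\mathbb{R}$; combining these two facts kills the embedding. You instead take the most degenerate possible subset, a non-computable singleton $\{f\}$ in Cantor space, observe that the restricted basis collapses to $\{\{f\},\emptyset\}$ so that the constant name at the empty cylinder is a computable $\rho_{\alpha}^{\min}$-name, and that any $\rho_{\beta}^{\min}$-name of $f$ in $X$ must list prefixes of $f$ of unbounded length (your cylinder-intersection analysis is right: listed cylinders all contain $f$, hence are linearly ordered, so the covering condition against $C_{f\upharpoonright n}$ forces a listed prefix of length at least $n$), whence $f$ would be computable. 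Both arguments exploit the same underlying phenomenon --- the covering requirement of $\rho^{\min}$ is only relative to the restricted basis, so a point isolated in $A$ has names carrying essentially no information --- but yours isolates it in its purest form and is fully self-contained, while the paper's example buys something extra: it shows the failure persists even when $A$ is a computably separable computable metric space with infinitely many points, i.e.\ the pathology is not an artifact of taking a non-computable singleton, and it lets the paper share one construction between two results.
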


The proof of this proposition uses a construction that appears in
Section \ref{sec:The-sufficiently-many-approach and metric spaces }
and is postponed to this section. 
\begin{cor}
It is not always the case that $\rho_{\alpha}^{\min}\equiv(\rho_{\beta}^{\min})_{\vert A}$. 
\end{cor}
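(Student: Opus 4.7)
The plan is to exhibit a concrete example showing that no computable realizer exists. The structural obstruction is the asymmetry between the two representations: a $\rho_{\alpha}^{\min}$-name of $x\in A$ only has to list $\hat{\alpha}$-names whose intersections form a neighborhood sub-basis of $x$ inside $A$, whereas a $\rho_{\beta}^{\min}$-name must list $\hat{\beta}$-names whose intersections form a neighborhood sub-basis of $x$ inside $X$. When $x$ is isolated in $A$ but not in $X$, the former requirement is almost vacuous while the latter essentially pins $x$ down.

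Concretely, I take $X=\mathbb{R}$ with the standard total numbering $\beta$ of basic open intervals with rational endpoints, fix a non-computable real $x\in\mathbb{R}$, and consider the subset $A=\{x\}$ with the induced subbasis $\alpha$. Since $\hat{\alpha}(b)$ is either $\{x\}$ or $\emptyset$ for every $b\in\mathrm{dom}(\hat{\alpha})$, any constant sequence $(b_{0},b_{0},\dots)$ with $\hat{\alpha}(b_{0})=\{x\}$ is a valid $\rho_{\alpha}^{\min}$-name of $x$: the first clause is immediate, and the second holds because every $\hat{\alpha}$-set containing $x$ is already equal to $\{x\}$. One may take $b_{0}$ to be the $\hat{\beta}$-name of any fixed large rational interval containing $x$, which is a computable natural number, so $x$ has a computable $\rho_{\alpha}^{\min}$-name.

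I would then verify the standard fact that a $\rho_{\beta}^{\min}$-name of a real $x$ uniformly computes $x$ as a real number. Given such a name $f$ and a rational $\varepsilon>0$, apply the sub-basis clause of $\rho_{\beta}^{\min}$ to the $\hat{\beta}$-name of any rational interval of width less than $\varepsilon$ around $x$: this yields a finite sub-tuple of indices whose listed $\hat{\beta}$-sets have intersection contained in that interval. Enumerating all finite sub-tuples of $f$ and computing the (rational) intersections, one effectively finds some intersection of width less than $\varepsilon$; since every listed $\hat{\beta}$-set contains $x$, so does this intersection, and one reads off an $\varepsilon$-approximation of $x$ from its endpoints.

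If a computable realizer $F$ of $\{x\}\hookrightarrow\mathbb{R}$ existed with respect to $(\rho_{\alpha}^{\min},\rho_{\beta}^{\min})$, then $F$ applied to the computable $\rho_{\alpha}^{\min}$-name above would produce a computable $\rho_{\beta}^{\min}$-name of $x$, and thence make $x$ a computable real, contradicting our choice. The main subtlety lies in the third step: one must effectively extract arbitrarily narrow intersections from a $\rho_{\beta}^{\min}$-name, which works here because elements of $\hat{\mathfrak{B}}$ and their finite intersections are rational intervals whose endpoints can be computed from the indices listed by $f$; this is precisely the kind of computation that the metric-space constructions of the next section make systematic.
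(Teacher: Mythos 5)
Your argument is correct, and it establishes the corollary by the same logical route as the paper (non-computability of the embedding $A\hookrightarrow X$ for $(\rho_{\alpha}^{\min},\rho_{\beta}^{\min})$, combined with the fact that the identity on Baire space realizes $A\hookrightarrow X$ for $((\rho_{\beta}^{\min})_{\vert A},\rho_{\beta}^{\min})$, i.e.\ Proposition~\ref{prop: restriction rpz embedding computable }), but with a genuinely different witnessing example. The paper reuses the halting-set construction of Section~\ref{sec:The-sufficiently-many-approach and metric spaces }, namely $A=\bigcup_{n\in K}[n-1/2,n+1/2]\cup\bigcup_{n\notin K}\{n\}\subseteq\mathbb{R}$, and derives a uniform obstruction: any realizer would yield an enumeration of $K^{c}$. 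You instead take $A=\{x\}$ for a single non-computable real $x$, observe that the constant sequence on one interval name is already a valid and computable $\rho_{\alpha}^{\min}$-name (all three of your verifications -- the vacuity of the covering clause for the singleton subbasis, the extraction of arbitrarily narrow rational intersections from a $\rho_{\beta}^{\min}$-name over $\mathbb{R}$, and the decidability of the width test -- are sound), and conclude that a computable realizer would make $x$ computable. Your example is more elementary and isolates the structural point cleanly: a point can be isolated in $A$ without being isolated in $X$, so a $\rho_{\alpha}^{\min}$-name can carry essentially no information about where $x$ sits in $X$. What the paper's heavier example buys in exchange is that its $A$ is a computably separable computable metric space with no pathology, so the failure cannot be blamed on $A$ being a degenerate subset; that example also does double duty, since it simultaneously proves Proposition~\ref{prop: discrete and not discrete} (that $\rho_{\beta}^{\min}$ need not be equivalent to the Cauchy representation even on computably separable spaces), which your singleton cannot show. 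The only cosmetic remark is that your final sentence proves non-computability of the embedding; to land exactly on the statement of the corollary you should add the one-line deduction that $\rho_{\alpha}^{\min}\le(\rho_{\beta}^{\min})_{\vert A}$ would hand you a computable $\rho_{\beta}^{\min}$-name of $x$ directly (a $(\rho_{\beta}^{\min})_{\vert A}$-name of $x$ is in particular a $\rho_{\beta}^{\min}$-name of $x$), which is the step the paper delegates to Proposition~\ref{prop: restriction rpz embedding computable }.
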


\begin{proof}
This follows by Proposition \ref{prop: restriction rpz embedding computable }. 
\end{proof}

\section{\label{sec:Metric-spaces-and}Metric spaces and the Cauchy representation}

Recall from Section \ref{subsec:Computable-metric-spaces} that on
a non-computably separable computable metric space, we have four representations:
$\rho_{\beta}^{\max}$, $\rho_{\beta}^{\min}$, $\rho_{\beta}^{\mathring{\subseteq}}$
(where $\mathring{\subseteq}$ is the strong inclusion that comes
from the metric) and $\rho_{Cau}$, the Cauchy representation. 

In this section, we establish the following result: 
\begin{thm}
The following reductions hold on any computable metric space: 
\[
\rho_{\beta}^{\max}\equiv\rho_{Cau}\equiv\rho_{\beta}^{\mathring{\subseteq}}\le\rho_{\beta}^{\min},
\]
 and $\le$ can sometimes be strict. 

The following reductions hold on any non-computably separable computable
metric space:
\[
\rho_{\beta}^{\max}\le\rho_{Cau}\equiv\rho_{\beta}^{\mathring{\subseteq}}\le\rho_{\beta}^{\min},
\]
 and each $\le$ can sometimes be strict. 
\end{thm}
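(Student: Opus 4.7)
The outer reductions $\rho_{\beta}^{\max}\le\rho_{\beta}^{\mathring{\subseteq}}\le\rho_{\beta}^{\min}$ are already established as general facts in the introduction, so the substantive content splits into two independent pieces: the equivalence $\rho_{Cau}\equiv\rho_{\beta}^{\mathring{\subseteq}}$, which holds on every computable metric space (separable or not), and the reduction $\rho_{Cau}\le\rho_{\beta}^{\max}$, which requires effective separability. I will prove each direction of the first equivalence, then the separable reduction, and finally indicate how strictness is witnessed. Throughout I work with the non-strict metric strong inclusion $\langle n_{1},m_{1}\rangle\mathring{\subseteq}\langle n_{2},m_{2}\rangle\iff d(\nu(n_{1}),\nu(n_{2}))+c_{\mathbb{Q}}(m_{1})\le c_{\mathbb{Q}}(m_{2})$, since the paper has observed that it induces the same strong neighborhood bases as the strict version.

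For $\rho_{Cau}\le\rho_{\beta}^{\mathring{\subseteq}}$, given a Cauchy name $p=(p_{n})_{n}$ of $x$, my realizer outputs for each $n$ the singleton $\hat{\beta}$-name coding $B(\nu(p_{n}),2^{-n+1})$. Each such ball contains $x$ since $d(\nu(p_{n}),x)\le 2^{-n}$. For the strong neighborhood basis condition, take any $\hat{\beta}$-name $n_{2}$ with $x\in\hat{\beta}(n_{2})=\bigcap_{i}B(\nu(a_{i}),r_{i})$ and set $\varepsilon_{i}=r_{i}-d(x,\nu(a_{i}))>0$. Picking $n$ large enough that $3\cdot 2^{-n}\le\min_{i}\varepsilon_{i}$, the triangle inequality gives $d(\nu(p_{n}),\nu(a_{i}))+2^{-n+1}\le d(x,\nu(a_{i}))+3\cdot 2^{-n}\le r_{i}$ for every $i$, which says that the singleton $\hat{\beta}$-name for $B(\nu(p_{n}),2^{-n+1})$ is strongly included in $n_{2}$ via the relation extended to $\hat{\beta}$.

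For the reverse $\rho_{\beta}^{\mathring{\subseteq}}\le\rho_{Cau}$, starting from a sequence of $\hat{\beta}$-names $(u_{k})$ forming a strong neighborhood basis of $x$, my realizer enumerates every pair $(a,m)$ appearing in some $\Delta_{u_{k}}$; each satisfies $d(\nu(a),x)<c_{\mathbb{Q}}(m)$. The key lemma is that for every $\varepsilon>0$ some extracted pair of radius $<\varepsilon$ must appear: by density of $A=\nu(\text{dom}(\nu))$ there exists $a\in\text{dom}(\nu)$ with $d(\nu(a),x)<\varepsilon/2$; the singleton $\hat{\beta}$-name for $B(\nu(a),\varepsilon/2)$ then contains $x$, and the strong neighborhood basis property produces some $u_{k}$ whose $\Delta$-set contains $(a',m')$ with $d(\nu(a'),\nu(a))+c_{\mathbb{Q}}(m')\le\varepsilon/2$, forcing $c_{\mathbb{Q}}(m')<\varepsilon$. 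Selecting for each $k$ the first extracted pair of radius $<2^{-k-2}$ yields indices $q_{k}\in\text{dom}(\nu)$ with $d(\nu(q_{k}),x)<2^{-k-2}$; for $i>j$ this gives $d(\nu(q_{i}),\nu(q_{j}))<2^{-i-2}+2^{-j-2}\le 2^{-j-1}<2^{-j}$, so $(q_{k})$ is a valid Cauchy name of $x$.

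Finally, for $\rho_{Cau}\le\rho_{\beta}^{\max}$ in the effectively separable case, totality of $\nu$ makes $\text{dom}(\beta)$ decidable, and given a Cauchy name of $x$ the inequality $d(\nu(a),x)<c_{\mathbb{Q}}(m)$ is semi-decidable because $d(\nu(a),x)$ is approximated by $d(\nu(a),\nu(p_{n}))$ with error at most $2^{-n}$ and $d$ is $(\nu\times\nu,c_{\mathbb{R}})$-computable. Enumerating all witnessing pairs produces a $\rho_{\beta}^{\max}$-name. The main remaining obstacle is exhibiting the strictness witnesses. For $\rho_{\beta}^{\max}<\rho_{Cau}$ in the non-separable setting one points to a non-computably separable space admitting a Cauchy-computable point that lies outside every c.e. subset of $\text{dom}(\beta)$ pulled back through $\nu$; for $\rho_{\beta}^{\mathring{\subseteq}}<\rho_{\beta}^{\min}$ one produces a computable metric space in which inclusion between basic balls is not semi-decidable (for instance by taking a dense subset whose pairwise distances are generic enough computable reals), so that a $\rho_{\beta}^{\min}$-name, which records only extensional containment, cannot be algorithmically converted to the metric/strong-inclusion data needed for Cauchy approximation.
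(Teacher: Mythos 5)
Your positive reductions are correct and follow essentially the same route as the paper: the chain $\rho_{\beta}^{\max}\le\rho_{\beta}^{\mathring{\subseteq}}\le\rho_{\beta}^{\min}$ from the introduction, the realizer $p\mapsto(\langle p(n),\text{name of }2^{-n+1}\rangle)_{n}$ for $\rho_{Cau}\le\rho_{\beta}^{\mathring{\subseteq}}$, the $\mu$-search for a listed ball of radius below $2^{-k-2}$ for the converse (your ``key lemma'' justifying totality of that search is exactly the point the paper makes, spelled out in slightly more detail), and the semi-decidability of $d(\nu(a),x)<c_{\mathbb{Q}}(m)$ from a Cauchy name in the effectively separable case. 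All of this is fine.

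The gap is in the strictness witnesses, and it is not cosmetic, because the strictness claims are part of the statement. For $\rho_{\beta}^{\mathring{\subseteq}}<\rho_{\beta}^{\min}$ (equivalently $\rho_{\beta}^{\min}\not\le\rho_{Cau}$ on some computable metric space), your proposed obstruction --- that inclusion between basic balls is not semi-decidable, e.g.\ for a dense set with ``generic'' computable pairwise distances --- does not yield the non-reduction. Non-semi-decidability of $\beta(n)\subseteq\beta(m)$ is neither necessary nor sufficient here: a realizer for $\rho_{\beta}^{\min}\le\rho_{Cau}$ never needs to decide inclusion, it only needs to extract, from a list of balls whose finite intersections refine every neighborhood of $x$, points provably close to $x$; and in a space like $\mathbb{R}$ one can do this by searching for finite subfamilies whose intersection has small diameter, regardless of how wild the centers are. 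The actual obstruction the paper exploits is the presence of isolated points whose isolation cannot be certified: in $A=\bigcup_{n\in K}[n-1/2,n+1/2]\cup\bigcup_{n\notin K}\{n\}$, the constant sequence naming $]n-1/4,n+1/4[$ is a legitimate $\rho_{\beta}^{\min}$-name of $n$ when $n\notin K$ (that single ball already refines every neighborhood of $n$), so a translation to $\rho_{Cau}$ would enumerate $K^{c}$. Your witness for $\rho_{Cau}\not\le\rho_{\beta}^{\max}$ is closer in spirit to the paper's (which takes $K^{c}\subseteq\mathbb{N}$ with the inherited metric, where a $\rho_{\beta}^{\max}$-name of any point enumerates $K^{c}$ via the centers of the listed balls), but as written it is only a description of the desired property, not a construction. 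Both witnesses need to be made concrete for the proof to be complete.
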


Note that the positive statements about computable metric spaces in
the above can be gathered from \cite{Kreitz1985,Weihrauch1987,Weihrauch2000,Spreen2001}. 

\subsection{\label{sec:The-sufficiently-many-approach and metric spaces }The
\textquotedblleft sufficiently many basic sets\textquotedblright{}
approach and metric spaces}

Let $(X,A,\nu,d)$ be a non-necessarily computably separable computable
metric space, and denote by $\beta$ the numbering of open balls with
rational radii associated to $(X,A,\nu,d)$. 
\begin{prop}
We have $\rho_{Cau}\le\rho_{\beta}^{\min}$. 
\end{prop}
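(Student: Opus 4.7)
The strategy is to build a computable function $F \colon \mathbb{N}^{\mathbb{N}} \to \mathbb{N}^{\mathbb{N}}$ that turns a Cauchy name $p$ of a point $x$ into a sequence of $\beta$-names of open balls satisfying the two clauses defining $\rho_{\beta}^{\min}$. The key observation is that if $p$ is a Cauchy name of $x$, then the inequality $d(\nu(p(i)), \nu(p(j))) < 2^{-j}$ for every $i > j$ passes to the limit as $i \to +\infty$, yielding $d(x, \nu(p(j))) \le 2^{-j}$. Consequently, whenever $q$ is a positive rational with $q > 2^{-j}$, the open ball $B(\nu(p(j)), q)$ contains $x$.

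Given $p$, define $F(p)$ to enumerate, in some fixed effective manner, all pairs $\langle p(j), m\rangle$ with $j \in \mathbb{N}$ and $m \in \mathbb{N}$ such that $c_{\mathbb{Q}}(m) > 2^{-j}$. The side condition on $m$ is decidable and each $p(j)$ lies in $\text{dom}(\nu)$ since $p$ is a Cauchy name, hence every enumerated pair lies in $\text{dom}(\beta)$ and the resulting function $F$ is computable. Moreover, by the opening observation, each enumerated $\beta$-name $\langle p(j), m\rangle$ names a ball $B(\nu(p(j)), c_{\mathbb{Q}}(m))$ that contains $x$, so the first clause of $\rho_{\beta}^{\min}$ is satisfied.

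For the refinement clause, fix any $B \in \mathfrak{B}$ with $x \in B$, say $B = B(\nu(n), q)$ with $q > d(x, \nu(n))$, and set $\epsilon := q - d(x, \nu(n)) > 0$. Pick any $j$ with $2^{-j} < \epsilon/3$. By the triangle inequality, $d(\nu(p(j)), \nu(n)) \le 2^{-j} + d(x, \nu(n)) < q - 2\epsilon/3$, so the interval $(2^{-j},\, q - d(\nu(p(j)), \nu(n)))$ has positive length and contains some rational $r = c_{\mathbb{Q}}(m)$. The pair $\langle p(j), m\rangle$ is then in the enumeration, and a second application of the triangle inequality gives $B(\nu(p(j)), r) \subseteq B(\nu(n), q) = B$. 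A single enumerated ball therefore serves as the one-element intersection refining $B$, which suffices for the $\rho_{\beta}^{\min}$-condition.

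In this proof there is no real obstacle: the only subtleties are passing from the strict Cauchy inequality to the non-strict bound $d(x, \nu(p(j))) \le 2^{-j}$ at the limit, and ensuring that the enumeration is computable without ever consulting the metric itself \textemdash{} which is automatic, since $F$ only reads $p(j)$ and the rational $c_{\mathbb{Q}}(m)$. One may note, as a side remark, that the construction never uses finite intersections, illustrating that the basis of metric balls is already closed under refinement in the strong sense needed here.
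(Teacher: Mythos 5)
Your proof is correct and follows essentially the same route as the paper's: convert the Cauchy name $p$ into a sequence of names of balls centered at the points $\nu(p(j))$ with small radii. You are in fact slightly more careful than the paper, which directly uses the balls $B(\nu(p(j)),2^{-j})$ and glosses over the possibility that $d(x,\nu(p(j)))=2^{-j}$ exactly; your use of radii strictly greater than $2^{-j}$ handles this cleanly.
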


\begin{proof}
The $\rho_{Cau}$ name of a point $x$ is a sequence (of names) of
points $(u_{n})_{n\in\mathbb{N}}$ that converges towards $x$ at
exponential speed. A sequence of names for the balls $(B(u_{n},2^{-n}))_{n\in\mathbb{N}}$
is then a $\rho_{\beta}^{\min}$-name of $x$. 
\end{proof}
It was already remarked by Weihrauch in \cite[Section 3.4,  Therorem 18]{Weihrauch1987}
that the existence of isolated point could be a problem for the representation
$\rho_{\beta}^{\min}$. We show:
\begin{prop}
\label{prop: discrete and not discrete}The representation $\rho_{\beta}^{\min}$
does not have to be equivalent to the Cauchy representation of $X$,
even if $X$ is computably separable. 
\end{prop}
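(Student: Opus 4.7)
The plan is to exhibit a computably separable computable metric space $X$ together with a point $x \in X$ such that $\rho_\beta^{\min} \not\le \rho_{Cau}$. Combined with the reduction $\rho_{Cau} \le \rho_\beta^{\min}$ already established, this yields $\rho_\beta^{\min} \not\equiv \rho_{Cau}$.

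Concretely, I will take a countable subspace of $\mathbb{R}$ mixing isolated points with an accumulation point, such as $X = \{0\} \cup \{1/n : n \in \mathbb{N}^*\}$ inherited from $\mathbb{R}$, with $\nu(0) = 0$ and $\nu(n) = 1/n$ for $n \geq 1$. This gives a computably separable CMS with associated ball-numbering $\beta$. The key observation, going back to Weihrauch \cite[Section 3.4, Theorem 18]{Weihrauch1987}, is that isolated points of $X$ are problematic for the representation $\rho_\beta^{\min}$: a $\rho_\beta^{\min}$-name of an isolated point $1/n$ need not include any basic ball whose interior is $\{1/n\}$, but may instead describe $\{1/n\}$ only through the intersection of basic balls centered elsewhere (for instance, at $1/(n{-}1)$ and $1/(n{+}1)$ with suitable radii).

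I will exploit this flexibility to build a specific $\rho_\beta^{\min}$-name of some $x \in X$ from which no Turing functional can extract a Cauchy name. The plan is to enumerate balls $B(\nu(k_i), r_i)$ whose finite intersections do shrink to a neighborhood basis of $x$, but arranged so that the centers $\nu(k_i)$ are revealed in an order which does not expose the $\nu$-index of $x$ except through non-uniform look-ahead. Then a diagonal argument against every potential computable realizer $F : \subseteq \mathbb{N}^\mathbb{N} \to \mathbb{N}^\mathbb{N}$ of the putative reduction is carried out: for each $F$ one constructs a valid $\rho_\beta^{\min}$-name $f_F$ of a point $x_F \in X$ such that the sequence $F(f_F)$ fails to satisfy the exponential Cauchy bound $d(\nu(F(f_F)(j)), x_F) < 2^{-j}$. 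The continuity of $F$ as a functional on Baire space plays a decisive role: a sufficiently long prefix of $f_F$ determines the initial segment of $F(f_F)$, and one can append further balls to this prefix so as to contradict whatever approximation $F$ has committed to.

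The main obstacle is the combinatorial construction of the bad name. It must simultaneously satisfy two competing demands: (a) validity as a $\rho_\beta^{\min}$-name of $x$, i.e.\ every basic ball containing $x$ must contain some finite intersection of balls named by $f$, and (b) enough ``ambiguity'' in the initial segments of $f$ so that the diagonal step against $F$ succeeds. Striking this balance in $X = \{0\} \cup \{1/n\}$ or a minor variant, using the flexibility afforded by the fact that isolated points can be pinned down by indirect intersections rather than by direct centering, is the heart of the proof.
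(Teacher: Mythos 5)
There is a genuine gap: the space you choose does not have the property you need, and the ``heart of the proof'' you defer (the combinatorial construction of an ambiguous name plus a diagonalization) cannot be carried out in it. For $X=\{0\}\cup\{1/n\}$ with $\nu(0)=0$, $\nu(n)=1/n$, the reduction $\rho_{\beta}^{\min}\le\rho_{Cau}$ in fact \emph{holds}, so every diagonalization attempt must fail. Indeed, all distances $d(\nu(a),\nu(b))=|1/a-1/b|$ are exactly computable rationals, and every ball is the trace on $X$ of an interval $(c-r,c+r)$. A valid $\rho_{\beta}^{\min}$-name of $0$ must contain balls $B(c,r)$ with $c<r$ and $c+r$ arbitrarily small (otherwise the finite intersections never get inside $B(0,1/l)$ for suitable $l$), hence balls with arbitrarily small explicit radii, whose centers are the desired approximations. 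A valid name of an isolated point $1/m$ must eventually list some ball with $c-r>0$ (this is needed to expel $0$ and the tail $\{1/l\}_{l\gg0}$ from the intersection); once such a ball appears, the prefix's intersection is contained in a set of at most $\lceil 1/(c-r)\rceil$ points, and with exact rational arithmetic one computes the finite intersection precisely and waits until it is a singleton. Running these two searches in parallel is a computable realizer of $\mathrm{id}_{X}$ from $\rho_{\beta}^{\min}$ to $\rho_{Cau}$. Your adversary cannot beat it: to keep a name valid it must eventually shrink the intersection, and the moment it does, the realizer verifies this and commits correctly.

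What is actually needed, and what your plan omits, is a space in which ``this listed ball is a singleton'' (equivalently, ``the intersection has already collapsed'') is \emph{not semi-decidable}. Mere presence of isolated points is not enough; the isolation must be computationally hidden. The paper achieves this by encoding the halting set into the topology: it takes $A=\bigcup_{n\in K}[n-1/2,n+1/2]\cup\bigcup_{n\notin K}\{n\}$, so that the constant sequence repeating a name of $]n-1/4,n+1/4[$ is a valid $\rho_{\beta}^{\min}$-name of $n$ exactly when $n\notin K$. A single machine converting such names to Cauchy names would then let one enumerate $K^{c}$ (it must reach precision $1/4$ on the valid names, and cannot do so when $n\in K$ since the read prefix extends to names of points $1/2$ apart). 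Note also that no diagonalization against individual realizers is needed there --- one fixed family of constant names defeats all machines at once via a many-one reduction from $K^{c}$. If you want to keep your proof architecture, you must first build non-computability into the space or the basis; as it stands, your candidate space is ``too computable'' for the statement to be witnessed on it.
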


The idea is the following. We build a computable metric space which
is discrete in some places and not discrete in others. For a point
$x$ which is isolated, if $n$ is a name of a ball $B(x,r)$ which
contains only $x$, i.e. $B(x,r)=\{x\}$, then $(n,n,n,n,...)$ is
a $\rho_{\beta}^{\min}$-name for $x$. However, to construct a Cauchy
name for $x$ starting from this $\rho_{\beta}^{\min}$-name, one
has to be able to computably understand that $x$ is isolated, and
that the ball $B(x,r)$ determines $x$ uniquely. We choose a space
where this is not possible. 

Note that this example is precisely based upon a case where the strong
inclusion relation is different from the actual inclusion relation.
Indeed, with the notations above, we have:
\begin{itemize}
\item For any $m$ such that $x\in\beta(m)$, then $\beta(n)\subseteq\beta(m)$. 
\item It is not true that for any $m$ such that $x\in\beta(m)$, then $n\mathring{\subseteq}m$.
Indeed, this fails if $m$ is a name for $B(x,r/2)$ (i.e. a name
where the radius \emph{explicitly given} is $r/2$). 
\end{itemize}
\begin{proof}
We take a certain subset of $\mathbb{R}$. Denote by $K$ the halting
set: $K=\{n\in\mathbb{N},\,\varphi_{n}(n)\downarrow\}$. Consider
the union 
\[
A=\bigcup_{n\in K}[n-1/2,n+1/2]\cup\bigcup_{n\notin K}\{n\}.
\]
Thus $A$ is discrete in some places and not discrete in others. $A$
admits a dense and computable sequence: the set of natural numbers,
together with the set of rationals in $[n-1/2,n+1/2]$, for each $n$
in $K$, which is a r.e. set because $K$ is. 

The usual metric of $\mathbb{R}$ remains computable when restricted
to the set of rationals in $A$. Thus $A$ is a computable metric
space. Denote by $\beta$ the numbering of open intervals of $A$
induced by that of $\mathbb{R}$: if $\gamma$ is a numbering of open
intervals of $\mathbb{R}$ that have rational endpoints, put 
\[
\beta(n)=\gamma(n)\cap A.
\]

In this setting, for $n\notin K$, denote by $m$ a $\gamma$-name
of the basic set $]n-1/4,n+1/4[$. Then the constant sequence $(m,m,m,m...)$
constitutes a valid $\rho_{\beta}^{\min}$-name of $n$. Suppose there
is a Type-2 machine $T$ that on input the $\rho_{\beta}^{\min}$-name
of a point of $A$ transforms it in a $\rho_{Cau}$-name of this point.
Then we can enumerate numbers $n$ for which the $\rho_{Cau}$-name
produced by $T$ when given as input a constant sequence as above
gives a precision better than $1/4$. This gives precisely an enumeration
of $K^{c}$. This is a contradiction. 
\end{proof}
We finally use the construction above to prove Proposition \ref{prop:The-embedding-not-computable min rpz }.
\begin{proof}
[Proof of Proposition \ref{prop:The-embedding-not-computable min rpz }]In
the construction that appears above in the proof of Proposition \ref{prop: discrete and not discrete},
we build a metric space $A$ equipped with a numbered basis with numbering
$\beta$ and where $\rho_{\beta}^{\min}\not\le\rho_{Cau}$. The constructed
set is a subset of $\mathbb{R}$, and it is easy to see that in this
case the Cauchy numbering $\rho_{Cau}$ on $A$ is the restriction
of the Cauchy numbering on $\mathbb{R}$, which is itself equivalent
to $\rho_{\gamma}^{\min}$, where $\gamma$ denotes the numbering
of open intervals of $\mathbb{R}$ with rational endpoints. And thus
we have: $\rho_{\beta}^{\min}\not\le\rho_{Cau}$, $\rho_{Cau}\equiv(\rho_{\gamma}^{\min})_{\vert A}$,
and thus $\rho_{\beta}^{\min}\not\le(\rho_{\gamma}^{\min})_{\vert A}$.
Finally this directly implies that the embedding $A\hookrightarrow\mathbb{R}$
is not $(\rho_{\beta}^{\min},\rho_{\gamma}^{\min})$-computable. 
\end{proof}

\subsection{The representation associated to \textquotedblleft all names of basic
sets\textquotedblright{} and non-computably separable metric spaces}

Let $(X,A,\nu,d)$ denote again a non-necessarily computably separable
computable metric space, and denote by $\beta$ the numbering of open
balls with rational radii associated to $(X,A,\nu,d)$. 
\begin{prop}
We have $\rho_{\beta}^{\max}\le\rho_{Cau}$. 
\end{prop}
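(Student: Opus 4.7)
The plan is to describe, given a $\rho_\beta^{\max}$-name of a point $x$, an effective procedure that produces a $\rho_{Cau}$-name of $x$. Recall that a $\rho_\beta^{\max}$-name of $x$ is an enumeration of exactly those pairs $\langle n,m\rangle \in \text{dom}(\beta)$ such that $d(\nu(n),x) < c_{\mathbb{Q}}(m)$. I will use this enumeration to extract, for each precision $2^{-(k+1)}$, an element of $A$ that lies within distance $2^{-(k+1)}$ of $x$.

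The key step: for each $k \in \mathbb{N}$, the algorithm searches the $\rho_\beta^{\max}$-name until it finds a pair $\langle n,m\rangle$ with $c_{\mathbb{Q}}(m) < 2^{-(k+1)}$, and then outputs $p(k) := n$. Such a pair must eventually appear because $A$ is dense in $X$: there exists $n_0 \in \text{dom}(\nu)$ with $d(\nu(n_0), x) < 2^{-(k+2)}$, and then for any rational $r$ with $d(\nu(n_0), x) < r < 2^{-(k+1)}$, the pair $\langle n_0, m\rangle$ (where $c_{\mathbb{Q}}(m) = r$) lies in $\text{dom}(\beta)$ and satisfies $x \in \beta(\langle n_0, m\rangle)$. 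By the defining property of $\rho_\beta^{\max}$, this pair is listed in the enumeration, so the search terminates. The condition ``$c_{\mathbb{Q}}(m) < 2^{-(k+1)}$'' is decidable on $\text{dom}(\beta)$, so the procedure is effective.

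It remains to check that the sequence $p$ thus produced is a valid Cauchy name. By construction, $d(\nu(p(k)), x) < c_{\mathbb{Q}}(m_k) < 2^{-(k+1)}$, so $\nu(p(k)) \to x$. Moreover, for $i > j$ the triangle inequality gives
\[
d(\nu(p(i)), \nu(p(j))) \le d(\nu(p(i)), x) + d(x, \nu(p(j))) < 2^{-(i+1)} + 2^{-(j+1)} \le 2^{-j},
\]
which is precisely the speed-of-convergence requirement in the definition of $\rho_{Cau}$. Thus $p \in \text{dom}(\rho_{Cau})$ and $\rho_{Cau}(p) = x$, witnessing that the identity on $X$ is $(\rho_\beta^{\max}, \rho_{Cau})$-computable.

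There is no real obstacle here; the only thing to watch is that we must be careful to select $p(k)$ from a pair whose \emph{explicitly named} radius is small, and not merely one whose set-theoretic ball happens to be small. This is exactly what distinguishes $\rho_\beta^{\max}$ from $\rho_\beta^{\min}$ in the previous subsection, and it is precisely this distinction that makes the translation possible here while it failed there.
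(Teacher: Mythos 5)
Your proof is correct and follows essentially the same route as the paper's: the paper's argument is exactly the ``blind search in the $\rho_{\beta}^{\max}$-name for a ball of explicitly named radius below $2^{-(k+1)}$'' that you describe, with termination guaranteed because density of $A$ forces such balls to occur in the enumeration. You merely spell out the density argument and the verification of the Cauchy modulus, both of which are fine.
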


\begin{proof}
This is very simple: a $\rho_{\beta}^{\max}$-name contains names
of balls of arbitrarily small radius. Given a $\rho_{\beta}^{\max}$-name
of a point $x$, a blind search in this name for a $2^{-n}$ good-approximation
of $x$ will always terminate. 
\end{proof}
The following is well known: it shows that the definition of \emph{computable
topological space }as introduced in \cite{Weihrauch2009ElementaryCT}
(see Definition \ref{def:Usual Definition }) is coherent with the
definition of computable metric space as used since \cite{Weihrauch2003}.
It is for instance recalled in \cite[Example 9.4.16]{Schroeder2021}. 
\begin{prop}
As soon as $(X,A,\nu,d)$ has a dense and computable sequence, for
the numbering $\beta$ of open balls given by rational radii and centers
in the dense sequence, one also has $\rho_{Cau}\equiv\rho_{\beta}^{\max}$.
\end{prop}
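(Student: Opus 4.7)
The plan is to establish the only non-trivial direction, namely $\rho_{Cau}\le\rho_{\beta}^{\max}$, by giving a computable procedure that, from a Cauchy name of a point $x$, enumerates exactly the set of $\beta$-names of rational balls containing $x$. The reverse reduction $\rho_{\beta}^{\max}\le\rho_{Cau}$ is already handled by the preceding proposition, and combining the two inequalities with the previously noted $\rho_{\beta}^{\max}\le\rho_{Cau}$ yields the equivalence.

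For the reduction $\rho_{Cau}\le\rho_{\beta}^{\max}$, I would proceed as follows. Since by hypothesis $\nu$ is total, the set of codes $\langle n,m\rangle$ with $c_{\mathbb{Q}}(m)>0$ is recursive, so $\text{dom}(\beta)$ is recursive and can be effectively enumerated. Given a Cauchy name $(u_{k})_{k\in\mathbb{N}}$ of $x$ with $d(\nu(u_{i}),\nu(u_{j}))<2^{-j}$ for all $i>j$ and $x=\lim_{i}\nu(u_{i})$, the basic observation is that
\[
x\in B(\nu(n),c_{\mathbb{Q}}(m))\iff\exists k\in\mathbb{N},\ d(\nu(n),\nu(u_{k}))+2^{-k}<c_{\mathbb{Q}}(m).
\]
The $(\Leftarrow)$ direction follows from the triangle inequality together with the bound $d(x,\nu(u_{k}))\le 2^{-k}$, which is obtained by taking $i\to\infty$ in the Cauchy condition. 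The $(\Rightarrow)$ direction uses the same bound: if $d(\nu(n),x)<c_{\mathbb{Q}}(m)$, pick $k$ large enough that $d(\nu(n),x)+2\cdot 2^{-k}<c_{\mathbb{Q}}(m)$, then the triangle inequality gives $d(\nu(n),\nu(u_{k}))+2^{-k}<c_{\mathbb{Q}}(m)$.

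The right-hand side of the equivalence is semi-decidable uniformly in $\langle n,m\rangle$ and in the Cauchy name: $d$ restricted to $A\times A$ is $(\nu\times\nu,c_{\mathbb{R}})$-computable, so the value $d(\nu(n),\nu(u_{k}))$ can be approximated from above with arbitrary precision, and rational comparisons of approximations to a rational bound are decidable after a finite waiting time (one replaces the strict inequality by $d(\nu(n),\nu(u_k))_{\uparrow}+2^{-k}+2^{-\ell}<c_\mathbb{Q}(m)$ where the $\uparrow$ indicates an upper rational approximation and $\ell$ is increased along the search). I would run this semi-decision procedure in parallel over all pairs $\langle k,\langle n,m\rangle\rangle$, and output each $\langle n,m\rangle\in\text{dom}(\beta)$ as soon as some witness $k$ is found. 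The produced sequence is total, lists only codes of balls that contain $x$ (by the $(\Leftarrow)$ direction), and lists every such code (by the $(\Rightarrow)$ direction), hence it is a $\rho_{\beta}^{\max}$-name of $x$.

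The only subtle point, and what I would flag as the main obstacle, is the correct handling of semi-decidability of the strict inequality $d(\nu(n),\nu(u_k))+2^{-k}<c_{\mathbb{Q}}(m)$: $d(\nu(n),\nu(u_{k}))$ is only computable as a real, not a rational, so one must interleave a dovetailed search over $k$ with a refinement of the precision with which $d(\nu(n),\nu(u_{k}))$ is computed, and argue that whenever the strict inequality holds in $\mathbb{R}$ it is eventually detected from sufficiently good rational upper bounds. Once this routine semi-decidability argument is made precise, the reduction is immediate and the equivalence $\rho_{Cau}\equiv\rho_{\beta}^{\max}$ follows.
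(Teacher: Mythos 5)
Your argument is correct: the direction $\rho_{\beta}^{\max}\le\rho_{Cau}$ is indeed already covered by the preceding proposition, and your characterization $x\in B(\nu(n),c_{\mathbb{Q}}(m))\iff\exists k,\ d(\nu(n),\nu(u_{k}))+2^{-k}<c_{\mathbb{Q}}(m)$, together with the semi-decidability of the strict inequality for a computable real against a rational bound, gives the standard dovetailing enumeration of all codes of balls containing $x$ (the output is an infinite sequence since infinitely many coded balls contain $x$, so it is a legitimate $\rho_{\beta}^{\max}$-name). The paper itself gives no proof of this proposition, labelling it as well known and citing the literature, so there is nothing to compare against; your write-up is exactly the expected folklore argument and fills that gap correctly.
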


Note however, as we will discuss in Section \ref{sec:Equivalence-of-bases},
that the above proposition holds only for certain choices of a numbering
of open balls.

Finally, if $(X,A,\nu,d)$ does not have a dense and computable sequence,
it does not have to have a computably enumerable basis of open balls.
In this case, we have:
\begin{prop}
If $\beta$ is the natural numbering of open balls with rational radii
in a non-computably separable computable metric space, it is possible
that $\rho_{Cau}\not\le\rho_{\beta}^{\max}$. And it is possible that
$\rho_{\beta}^{\max}$ defines no computable point. 
\end{prop}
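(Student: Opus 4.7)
The plan is to exhibit a single very simple example that settles both claims at once: a non-computably separable computable metric space that is discrete, in which every point has a computable Cauchy name but no point has a computable $\rho_{\beta}^{\max}$-name.

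Concretely, I would take $X = K^c$ (the complement of the halting set), equip it with the discrete metric $d(x,y) = 1$ if $x \ne y$ and $0$ otherwise, set $A = X$, and let $\nu : \subseteq \mathbb{N} \to A$ be the identity restricted to $K^c$. The first step is to check that this is a non-computably separable computable metric space: $\text{dom}(\nu) = K^c$ is not c.e., so $\nu$ cannot be replaced by a total computable numbering of a dense subset; and the map $d \circ (\nu \times \nu)$ is computable because, on $K^c \times K^c$, it reduces to the decidable test $n \stackrel{?}{=} m$ on natural numbers. So $(X, A, \nu, d)$ qualifies under Hertling's more general definition of a CMS.

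The second step is the easy direction: every $x \in X$ has a computable $\rho_{Cau}$-name, namely the constant sequence $(x, x, x, \ldots)$, which is trivially a Cauchy sequence of $\nu$-names converging to $x$ and is computable since $x$ is a concrete natural number. In particular, $(X, \rho_{Cau})$ has computable points (in fact all of $X$).

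The third and central step is to show that no point of $X$ is a computable point of $\rho_{\beta}^{\max}$. Fix any $x \in X$. Since $X$ has diameter $1$, every basic ball $B(\nu(n), c_{\mathbb{Q}}(m))$ with $c_{\mathbb{Q}}(m) > 1$ equals $X$ and in particular contains $x$. Hence the set $\{ \langle n,m \rangle \in \text{dom}(\beta) : x \in \beta(\langle n,m\rangle) \}$ contains every pair $\langle n,m \rangle$ with $n \in K^c$ and $c_{\mathbb{Q}}(m) > 1$. A computable enumeration of this set would, by projecting onto first coordinates of pairs whose second coordinate encodes a rational strictly greater than $1$, produce a c.e. enumeration of $K^c$, contradicting that $K^c$ is not c.e. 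This proves the second claim immediately; and the first claim follows at once, since a computable reduction $\rho_{Cau} \le \rho_{\beta}^{\max}$ would transport the computable Cauchy names exhibited above into computable $\rho_{\beta}^{\max}$-names, which we just showed do not exist. The only step that could look subtle is the verification that the construction really is a CMS in the sense used in Section \ref{subsec:Computable-metric-spaces}, and this boils down to decidability of equality on $\mathbb{N}$, so there is no genuine obstacle.
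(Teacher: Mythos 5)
Your proposal is correct and is essentially the paper's own proof: the paper also takes $K^c$ with the identity numbering and observes that enumerating all names of balls containing a fixed point enumerates the centers, hence $K^c$, contradicting that $K^c$ is not c.e. The only (immaterial) difference is that you use the discrete metric where the paper uses the metric inherited from $\mathbb{N}$, and you spell out explicitly the deduction of $\rho_{Cau}\not\le\rho_{\beta}^{\max}$ from the existence of computable Cauchy names, which the paper leaves implicit.
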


\begin{proof}
We give a very simple example. Consider the set $K^{c}$, the complement
of the halting set, which is not r.e.. The numbering of $K^{c}$ is
the numbering induced by the identity on $\mathbb{N}$. Take the usual
metric of $\mathbb{N}$, $d(i,j)=\vert i-j\vert$, and the associated
numbering $\beta$ of balls with rational radii and centers in $K^{c}$. 

In this setting, no point of $K^{c}$ has admits a computable $\rho_{\beta}^{\max}$-name. 

Indeed, a computable enumeration of \emph{all balls} centered at points
in $K^{c}$ that contain a given point $x$ gives in particular an
enumeration of the set of all their centers, which is exactly $K^{c}$. 
\end{proof}

\subsection{The strong inclusion approach in metric spaces}

Let $(X,A,\nu,d)$ be a non-necessarily computably separable computable
metric space. Let $\beta$ be the numbering of balls with rational
radii induced by $\nu$. Denote by $\mathring{\subseteq}$ the strong
inclusion of $\beta$ induced by the metric, which comes from the
relation on balls parametrized by pair (point-radius) defined by 
\[
(x,r_{1})\mathring{\subseteq}(y,r_{2})\iff d(x,y)+r_{1}<r_{2}.
\]
We have already defined the Cauchy representation on $X$ and the
representation $\rho_{\beta}^{\mathring{\subseteq}}$ induced by the
numbering of the basis and the strong inclusion relation. As noted
before, we can equivalently replace $<$ by $\le$ in the definition
of $\mathring{\subseteq}$ above.
\begin{prop}
The equivalence $\rho_{Cau}\equiv\rho_{\beta}^{\mathring{\subseteq}}$
holds.
\end{prop}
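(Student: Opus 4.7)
The plan is to establish both reductions $\rho_{Cau}\le\rho_{\beta}^{\mathring{\subseteq}}$ and $\rho_{\beta}^{\mathring{\subseteq}}\le\rho_{Cau}$ by constructing explicit computable realizers that exploit the compatibility of the metric with the induced strong inclusion on $\hat{\beta}$.

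For $\rho_{Cau}\le\rho_{\beta}^{\mathring{\subseteq}}$, I would take a Cauchy name $(p(n))_{n\in\mathbb{N}}$ of $x$ (which satisfies $d(\nu(p(n)),x)\le 2^{-n}$) and output the sequence of singleton $\hat{\beta}$-names corresponding to the balls $B(\nu(p(n)),2^{-n+1})$. Each of these contains $x$. To verify the strong neighborhood basis condition, I would fix any $\hat{\beta}$-name $b$ of an intersection $B(\nu(y_1),r_1)\cap\dots\cap B(\nu(y_j),r_j)$ containing $x$, set $\epsilon=\min_i\bigl(r_i-d(x,\nu(y_i))\bigr)>0$, and pick $n$ with $3\cdot 2^{-n}<\epsilon$. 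The triangle inequality then yields $d(\nu(p(n)),\nu(y_i))+2^{-n+1}<r_i$ for every $i$, which is exactly the condition ensuring that the singleton on the left is $\mathring{\subseteq}$-included in $b$ via the extension of the strong inclusion from $\beta$ to $\hat{\beta}$.

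For the converse $\rho_{\beta}^{\mathring{\subseteq}}\le\rho_{Cau}$, from a name $(b_k)_{k\in\mathbb{N}}$ of $x$ I would, for each $n\in\mathbb{N}$, dovetail through the list searching for a $b_k$ whose intersection form contains some component ball $B(\nu(c),\delta)$ with $\delta<2^{-(n+2)}$, and output such a $c$ as the $n$-th approximation $u_n$. Density of $A$ furnishes some $a\in\mathrm{dom}(\nu)$ and rational $\delta'<2^{-(n+2)}$ with $x\in B(\nu(a),\delta')$; applying the strong neighborhood basis condition to the singleton $\hat{\beta}$-name $\{\langle a,m\rangle\}$ with $c_\mathbb{Q}(m)=\delta'$ then yields a $b_k$ one of whose components has rational radius strictly less than $\delta'$, by definition of the extension of $\mathring{\subseteq}$, so the search is guaranteed to terminate. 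Since $x\in\hat{\beta}(b_k)$, the point $x$ also lies in that component, so $d(x,\nu(u_n))<2^{-(n+2)}$, and the extra two bits of precision ensure the Cauchy condition $d(\nu(u_i),\nu(u_j))<2^{-j}$ for $i>j$ by the triangle inequality.

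The main subtlety I anticipate is the bookkeeping between names in the subbasis $\beta$ (single balls) and in the induced basis $\hat{\beta}$ (finite intersections), together with the correct application of the extension of $\mathring{\subseteq}$. Restricting to singleton $\hat{\beta}$-names on one side of each realizer and extracting a single small component on the other sidesteps this, since the extended strong inclusion reduces cleanly to the underlying metric strong inclusion on individual balls. A minor point worth noting is that the argument only uses density of $A$ in $X$, not any computable enumerability of $\mathrm{dom}(\nu)$, so it goes through in the non-computably separable setting as well.
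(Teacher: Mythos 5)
Your proposal is correct and follows essentially the same route as the paper: encode a Cauchy name as a sequence of balls with explicitly shrinking radii for one direction, and for the other use an unbounded search through the $\rho_{\beta}^{\mathring{\subseteq}}$-name for a ball whose explicitly given radius is small, with termination guaranteed by the strong-neighborhood-basis condition. Your version is slightly more careful on two points the paper glosses over --- using radius $2^{-n+1}$ rather than $2^{-n}$ to avoid the boundary of the open ball, and handling the $\beta$ versus $\hat{\beta}$ bookkeeping explicitly --- but these are refinements of the same argument, not a different one.
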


\begin{proof}
Denote by $t_{n}$ a $c_{\mathbb{Q}}$-name of $2^{-n}$. The map
$n\mapsto t_{n}$ can be supposed computable. If $p$ is a $\rho_{Cau}$-name
for a point $x$, then $q$ defined by 
\[
q(n)=\langle p(n),t_{n}\rangle
\]
defines a $\rho_{\beta}^{\mathring{\subseteq}}$-name of $x$. This
name is given as a computable function of $p$. 

Conversely, suppose that $q$ is a $\rho_{\beta}^{\mathring{\subseteq}}$-name
of $x$. Denote by $\text{fst}$ and $\text{snd}$ the two halves
of the inverse of the pairing function used to define the numbering
$\beta$ of balls in $(X,A,\nu,d)$. Then the following $p$ gives
a $\rho_{Cau}$-name for point $x$:
\[
p(n)=\text{fst}(q(\mu i,\,\text{snd}(q(i))<2^{-n-1}))).
\]
In words: $p(n)$ is defined as the center of the first ball of radius
less than $2^{-n}$ found in the name $q$ of $x$. The fact that
this application of the $\mu$-operator produces a total function
comes exactly from the hypothesis that $\rho_{\beta}^{\mathring{\subseteq}}$-names
give arbitrarily precise information \emph{with respect to the strong
inclusion: }in the $\rho_{\beta}^{\mathring{\subseteq}}$-name of
$x$ appear balls given by arbitrarily small radii.\emph{ }
\end{proof}

\section{\label{sec:Semi-decidable strong inclusion }Semi-decidable strong
inclusion }

In this section we consider semi-decidable strong inclusion relations. 

\subsection{Semi-decidable strong inclusion and c.e. open sets }

One of the purposes of using strong inclusion relations is to be able
to replace set inclusion by a semi-decidable relation. This plays
a crucial role in many applications: \cite{Hertling1996,Spr98,DOWNEY2023}. 

Here we highlight what seems to us to be the most important benefit
of having access to a semi-decidable strong inclusion relation $\mathring{\subseteq}$:
basic open sets are c.e. open with respect to the representation $\rho_{\beta}^{\mathring{\subseteq}}$.
The following is essentially contained in \cite[Lemma 8]{Spreen2001},
we are simply extending it to non-$\text{T}_{0}$ spaces. 
\begin{prop}
Let $(\mathcal{B},\beta)$ be a numbered subbasis for $X$, and let
$\mathring{\subseteq}$ be a strong inclusion for $(\hat{\mathcal{B}},\hat{\beta})$.
Suppose that $\mathring{\subseteq}$ is semi-decidable. Suppose also
that every point of $X$ admits a strong neighborhood basis. 

Then the basic open sets are uniformly c.e. open sets with respect
to the multi-representation $\rho_{\beta}^{\mathring{\subseteq}}$,
i.e.:
\[
\beta\le[\rho_{\beta}^{\mathring{\subseteq}}\rightarrow\rho_{\mathfrak{Si}}].
\]
\end{prop}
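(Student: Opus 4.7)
The plan is to produce, uniformly in a $\beta$-name $n$ of a basic set $B \in \mathcal{B}$, the code of a Type-2 Turing machine that, when fed a $\rho_{\beta}^{\mathring{\subseteq}}$-name $f$ of any point $x \in X$, halts precisely when $x \in B$. By definition of the Sierpi\'nski representation, producing such a code uniformly in $n$ witnesses the desired translation $\beta \le [\rho_{\beta}^{\mathring{\subseteq}} \to \rho_{\mathfrak{Si}}]$.

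First, I would move from the subbasis to the induced basis: from $n \in \text{dom}(\beta)$, compute a $\hat{\beta}$-name $\hat{n}$ for $B$ by taking the $\Delta$-code of the singleton $\{n\}$, so that $\hat{\beta}(\hat{n}) = \beta(n) = B$. This step is plainly computable in $n$.

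The algorithm the machine runs on input $f$ is then a standard dovetailing: it enumerates pairs $(p,k) \in \mathbb{N}^2$, and for each pair it executes $k$ steps of the semi-decision procedure for $f(p) \mathring{\subseteq} \hat{n}$ provided by the hypothesis that $\mathring{\subseteq}$ is semi-decidable. The machine halts as soon as one such computation succeeds. Correctness follows from the two clauses defining $\rho_{\beta}^{\mathring{\subseteq}}$: if $x \in B = \hat{\beta}(\hat{n})$, the second clause guarantees the existence of some $b \in \text{Im}(f)$ with $b \mathring{\subseteq} \hat{n}$, so the search terminates; conversely, if the search halts having found $p$ with $f(p) \mathring{\subseteq} \hat{n}$, then property (2) of a strong inclusion gives $\hat{\beta}(f(p)) \subseteq \hat{\beta}(\hat{n}) = B$, and the first clause of $\rho_{\beta}^{\mathring{\subseteq}}$ forces $x \in \hat{\beta}(f(p))$, hence $x \in B$.

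I do not anticipate a serious obstacle: the statement is essentially an unfolding of the definitions once semi-decidability of $\mathring{\subseteq}$ is assumed, and the only mildly technical point is to observe that the construction of the Turing machine code depends computably on $n$, so by the smn-theorem we indeed obtain a computable realizer of $\beta \le [\rho_{\beta}^{\mathring{\subseteq}} \to \rho_{\mathfrak{Si}}]$. The hypothesis that every point admits a strong neighborhood basis is used only to ensure that $\rho_{\beta}^{\mathring{\subseteq}}$ is a genuine multi-representation (surjective), so that the statement is well-posed; it plays no further role in the argument.
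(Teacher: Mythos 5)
Your proof is correct and follows essentially the same route as the paper's: the key observation in both is that $x\in B$ iff the $\rho_{\beta}^{\mathring{\subseteq}}$-name of $x$ contains some $b$ with $b\mathring{\subseteq}$ a name of $B$, which is semi-decidable by dovetailing the semi-decision procedure for $\mathring{\subseteq}$ over the entries of the name. Your version merely spells out the subbasis-to-basis conversion, the two directions of correctness, and the appeal to the smn-theorem, all of which the paper leaves implicit.
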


\begin{proof}
All we have to show is that there is a Type-2 Turing machine that,
on input the $\rho$-name of a point $x$ and the $\beta$-name $b_{0}$
of a basic set $B$, will stop if and only if $x\in B$. But notice
that, by definition of $\rho_{\beta}^{\mathring{\subseteq}}$, $x\in B$
if and only if any $\rho_{\beta}^{\mathring{\subseteq}}$-name of
$x$ contains a name $b_{1}\in\text{dom}(\beta)$ with $b_{1}\mathring{\subseteq}b_{0}$.
Since we suppose $\mathring{\subseteq}$ semi-decidable, the condition
``containing a name $b_{1}\in\text{dom}(\beta)$ with $b_{1}\mathring{\subseteq}b_{0}$''
is also semi-decidable. 
\end{proof}
We will now show that the result of this proposition is not true in
general: we give an example of a numbered basis $(\mathcal{B},\beta)$,
which defines a representation $\rho_{\beta}^{\min}$, and yet the
elements of $\mathcal{B}$ are not c.e. open for $\rho_{\beta}^{\min}$. 
\begin{exa}
Let $(X,\nu)$ be a numbered set on which equality is not semi-decidable.
(For example: the set of c.e. subsets of $\mathbb{N}$ with numbering
$W$, the computable reals with their numbering.) Consider a numbering
$\beta:\subseteq\mathbb{N}\rightarrow\mathcal{P}(X)$ given by $\text{dom}(\beta)=\text{dom}(\nu)$
and $\forall n\in\text{dom}(\beta),\,\beta(n)=\{\nu(n)\}$. Then we
get a representation $\rho_{\beta}^{\min}$ of $X$. The $\rho_{\beta}^{\min}$-name
of a point $x$ of $X$ is a sequence that contains one or more $\nu$-names
for $x$. But since equality is not semi-decidable for $\nu$, membership
in singletons cannot be semi-decidable. 
\end{exa}

\subsection{Semi-decidability of the strong inclusion and \textquotedblleft totalization
of a numbered basis\textquotedblright . }

In this section, we show that the obvious ``totalization'' of the
numbering of a basis cannot always be achieved while preserving semi-decidability
of a strong inclusion relation. 

Suppose that we start with a partially numbered basis $(\mathcal{B},\beta:\subseteq\mathbb{N}\rightarrow\mathcal{B})$.
We can totalize the numbering $\beta$: we define a numbering $\tilde{\beta}$,
with $\text{dom}(\tilde{\beta})=\mathbb{N}$, by 
\[
\forall n\in\text{dom}(\beta),\,\tilde{\beta}(n)=\beta(n);
\]
\[
\forall n\notin\text{dom}(\beta),\,\tilde{\beta}(n)=\emptyset.
\]

If $(\mathcal{B},\beta)$ was equipped with a strong inclusion relation
$\mathring{\subseteq}$, we can naturally extend it, and define a
strong inclusion $\mathring{\subseteq}'$ for $\tilde{\beta}$ as
follows:
\[
n\mathring{\subseteq}'m\iff(n,m\in\text{dom}(\beta)\,\&\,n\mathring{\subseteq}m)\text{ or }n=m.
\]
Note that if $\mathring{\subseteq}$ was reflexive, $\mathring{\subseteq}'$
remains so. 

There are several other possibilities to define $\mathring{\subseteq}'$,
one could for instance say that $n\mathring{\subseteq}'m$ when $n\notin\text{dom}(\beta)$
and $m\in\text{dom}(\beta)$, this also yields a strong inclusion
relation. 

It is immediate to see that, whatever the chosen extension $\mathring{\subseteq}'$
of $\mathring{\subseteq}$, we get $\rho_{\beta}^{\mathring{\subseteq}}\equiv\rho_{\tilde{\beta}}^{\mathring{\subseteq}'}$.
Indeed, names of the empty set never play a role in the definition
of the representation generated by a basis. 

Consider as a basis for the topology of $\mathbb{R}$ the set $\mathcal{B}$
of open balls with computable reals as center and rational radii.
It is equipped with the numbering $\beta$ given by 
\[
\text{dom}(\beta)=\{\langle n,m\rangle,\,n\in\text{dom}(c_{\mathbb{R}}),\,c_{\mathbb{Q}}(m)>0\},
\]
\[
\forall\langle n,m\rangle\in\text{dom}(\beta),\,\beta(\langle n,m\rangle)=]c_{\mathbb{R}}(n)-c_{\mathbb{Q}}(m),c_{\mathbb{R}}(n)+c_{\mathbb{Q}}(m)[
\]
where $c_{\mathbb{R}}$ is the usual numbering of computable reals
and $c_{\mathbb{Q}}$ is any usual total numbering of $\mathbb{Q}$.
We have the semi-decidable strong inclusion given by 
\[
\langle n_{1},m_{1}\rangle\mathring{\subseteq}\langle n_{2},m_{2}\rangle\iff\vert c_{\mathbb{R}}(n_{1})-c_{\mathbb{R}}(n_{2})\vert+c_{\mathbb{Q}}(m_{1})<c_{\mathbb{Q}}(m_{2}).
\]

\begin{prop}
For any of the totalizations of $\beta$ described above, the resulting
numbered basis cannot be equipped with a semi-decidable strong inclusion
that extends $\mathring{\subseteq}$. 
\end{prop}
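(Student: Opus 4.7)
My plan is to assume for contradiction that there is a semi-decidable strong inclusion $\mathring{\subseteq}'$ on $\text{dom}(\tilde{\beta}) = \mathbb{N}$ extending $\mathring{\subseteq}$, and to extract from it a recursive enumeration of $\text{dom}(c_{\mathbb{R}})$. This yields a contradiction, because the domain of the standard numbering of computable reals is $\Pi_{2}^{0}$-complete, hence not r.e. I want the argument to cover \emph{every} possible extension, not only the two explicitly displayed before the proposition.

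The first step is to fix an anchor: pick indices $e_{0}, k_{0}$ with $c_{\mathbb{R}}(e_{0}) = 0$ and $c_{\mathbb{Q}}(k_{0}) = 1$, and set $n_{0} = \langle e_{0}, k_{0} \rangle$, so that $\tilde{\beta}(n_{0}) = \,]{-1},1[$, a \emph{nonempty} basic open ball. Then I consider $S = \{m \in \mathbb{N} : n_{0} \mathring{\subseteq}' m\}$, which is r.e.\ by semi-decidability of $\mathring{\subseteq}'$. Axiom~(2) of a strong inclusion gives $\tilde{\beta}(n_{0}) \subseteq \tilde{\beta}(m)$ for every $m \in S$, and since $\tilde{\beta}(m) = \emptyset$ whenever $m \notin \text{dom}(\beta)$, this already forces $S \subseteq \text{dom}(\beta)$. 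The extension hypothesis then forces $S \supseteq \{\langle e, k \rangle \in \text{dom}(\beta) : |c_{\mathbb{R}}(e)| + 1 < c_{\mathbb{Q}}(k)\}$, while any further elements of $S$ still lie in $\text{dom}(\beta)$.

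The last step is to project $S$ onto its first coordinate. The image is r.e., as the computable image of an r.e.\ set. It sits inside $\text{dom}(c_{\mathbb{R}})$ since $S \subseteq \text{dom}(\beta)$; conversely, for any $e \in \text{dom}(c_{\mathbb{R}})$, the real $c_{\mathbb{R}}(e)$ is finite, so surjectivity of $c_{\mathbb{Q}}$ onto $\mathbb{Q}$ furnishes $k$ with $c_{\mathbb{Q}}(k) > |c_{\mathbb{R}}(e)| + 1$, placing $\langle e, k \rangle$ in $S$. Thus $\text{dom}(c_{\mathbb{R}})$ would be r.e.---contradiction.

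The delicate point I expect to be the main obstacle is ensuring the argument is genuinely insensitive to the choice of extension, since exotic extensions might add $\mathring{\subseteq}'$-predecessors of $n_{0}$ that are not controlled by the original semi-decidable $\mathring{\subseteq}$. The observation that defuses this is to choose $n_{0}$ with \emph{nonempty} $\tilde{\beta}$-value: axiom~(2) of a strong inclusion then becomes a hard constraint forcing every $\mathring{\subseteq}'$-predecessor of $n_{0}$ to live in $\text{dom}(\beta)$, so any extra elements an exotic extension might add cannot push the projection of $S$ outside of $\text{dom}(c_{\mathbb{R}})$, while the lower bound on the projection follows from the minimal requirement that $\mathring{\subseteq}'$ contains $\mathring{\subseteq}$.
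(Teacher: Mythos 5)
Your proof is correct and rests on the same two observations as the paper's own argument: axiom (2) of a strong inclusion, applied to the nonempty anchor $\tilde{\beta}(n_{0})=\,]-1,1[$, forces every $\mathring{\subseteq}'$-successor of $n_{0}$ into $\text{dom}(\beta)$, while the extension hypothesis forces every sufficiently large ball of $\text{dom}(\beta)$ to be such a successor. The only difference is the final reduction target: you project the r.e.\ upper cone $S$ to conclude that $\text{dom}(c_{\mathbb{R}})$ would be r.e., whereas the paper evaluates the same cone at indices $\langle b_{i},n_{\mathfrak{2}}\rangle$ manufactured from $\text{Tot}$ and concludes that $\text{Tot}$ would be semi-decidable --- an interchangeable step, since the non-r.e.-ness of $\text{dom}(c_{\mathbb{R}})$ is itself established by exactly that reduction from $\text{Tot}$.
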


\begin{proof}
Consider a $\beta$-name $n$ for $B(0,1)=]-1,1[$. Suppose that $\mathring{\subseteq}'$
is a semi-decidable extension of $\mathring{\subseteq}$ to $\mathbb{N}$.
For $i\in\mathbb{N}$, let $b_{i}$ be the code of a Turing machine
defined as follows: on input $k$, run $\varphi_{i}(k)$, if it stops
output $0$. Thus if $i\in\text{Tot}$, $b_{i}$ is a $c_{\mathbb{R}}$-name
of $0$, otherwise $b_{i}\notin\text{dom}(c_{\mathbb{R}})$. Denote
by $n_{\mathfrak{2}}$ a $c_{\mathbb{Q}}$-name of $2$. Then $\langle b_{i},n_{\mathfrak{2}}\rangle$
is a $\tilde{\beta}$-name of $B(0,2)$ when $i\in\text{Tot}$, and
a name of the empty set otherwise. And then a program semi-deciding
for $\mathring{\subseteq}'$ would semi-decide membership in Tot. 
\end{proof}

\section{\label{sec:Equivalence-of-bases}Equivalence of bases}

In this section, we restrict our attention to numbered bases, instead
of numbered subbases. But subbases are equivalent when the bases they
generate are equivalent, and the same relation holds for notion of
effective equivalence. 

\subsection{Representation \texorpdfstring{$\rho_{\beta}^{\max}$}{ρᵦᵐᵃˣ} and equivalence of bases }

We first note that the representation $\rho_{\beta}^{\max}$ is badly
behaved with respect to equivalence of bases: bases that ``should
be'' equivalent can give non-equivalent representations. We first
show this by an example that uses a non-computably enumerable basis,
and then modify it so that it uses only computably enumerable bases. 

The example used here is that of open balls of $\mathbb{R}$ given
either by rational radii/center or by computable reals for their radii
and center, and a totalized version of this last basis, which fills
every gap with the empty set. In the following section, we present
several notions of equivalence of bases, the first two bases considered
here are equivalent according to all these definitions, and the ``totalized
basis'' is also representation-equivalent and Nogina equivalent to
the other two, but it fails to be Lacombe equivalent to them. 

Denote by $c_{\mathbb{Q}}$ the usual numbering of $\mathbb{Q}$,
which is total. 

Denote by $c_{\mathbb{R}}$ the Cauchy numbering of $\mathbb{R}_{c}$. 

Denote by $\mathfrak{B}_{\mathbb{Q}}$ the set of open intervals of
$\mathbb{R}$ with rational endpoints. 

Define $\beta_{\mathbb{Q}}:\subseteq\mathbb{N}\rightarrow\mathfrak{B}_{\mathbb{Q}}$
by 
\[
\text{dom}(\beta_{\mathbb{Q}})=\{\langle n,m\rangle,\,c_{\mathbb{Q}}(m)>0\};
\]
\[
\beta_{\mathbb{Q}}(\langle n,m\rangle)=B(c_{\mathbb{Q}}(n),c_{\mathbb{Q}}(m)).
\]
The domain of $\beta_{\mathbb{Q}}$ is easily seen to be recursive,
and we can thus in fact suppose that $\beta_{\mathbb{Q}}$ is defined
on all of $\mathbb{N}$. 

Denote by $\mathfrak{B}_{\mathbb{R}}$ the set of open intervals of
$\mathbb{R}$ with computable reals as endpoints. 

Define $\beta_{\mathbb{R}}:\subseteq\mathbb{N}\rightarrow\mathfrak{B}_{\mathbb{R}}$
by 
\[
\text{dom}(\beta_{\mathbb{R}})=\{\langle n,m\rangle,\,c_{\mathbb{R}}(m)>0\};
\]
\[
\beta_{\mathbb{R}}(\langle n,m\rangle)=B(c_{\mathbb{R}}(n),c_{\mathbb{R}}(m)).
\]

Finally, we use a totalization of $\beta_{\mathbb{R}}$, denoted by
$\tilde{\beta}_{\mathbb{R}}$, by adding the empty set to $\mathfrak{B}_{\mathbb{R}}$
and changing the numbering as follows: 
\[
\text{dom}(\tilde{\beta}_{\mathbb{R}})=\mathbb{N};
\]
\[
\forall n\in\text{dom}(\beta_{\mathbb{R}}),\,\tilde{\beta}_{\mathbb{R}}(n)=\beta_{\mathbb{R}}(n),
\]
\[
\forall n\notin\text{dom}(\beta_{\mathbb{R}}),\,\tilde{\beta}_{\mathbb{R}}(n)=\emptyset.
\]

We then have:
\begin{prop}
The representations $\rho_{\beta_{\mathbb{R}}}^{\max}$ and $\rho_{\beta_{\mathbb{Q}}}^{\max}$
are not equivalent. 
\end{prop}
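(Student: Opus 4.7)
The plan is to prove non-equivalence by showing that the single point $0 \in \mathbb{R}$ admits a computable $\rho_{\beta_{\mathbb{Q}}}^{\max}$-name but admits no computable $\rho_{\beta_{\mathbb{R}}}^{\max}$-name. Since a translation $\rho_1 \le \rho_2$ automatically sends computable $\rho_1$-names to computable $\rho_2$-names of the same point, this discrepancy will immediately rule out $\rho_{\beta_{\mathbb{Q}}}^{\max} \le \rho_{\beta_{\mathbb{R}}}^{\max}$ and therefore preclude equivalence.

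The easy half is that $0$ has a computable $\rho_{\beta_{\mathbb{Q}}}^{\max}$-name. Indeed, the set
\[
S_{\mathbb{Q}} \;=\; \bigl\{\langle n,m\rangle \in \mathbb{N} : c_{\mathbb{Q}}(m) > 0 \text{ and } |c_{\mathbb{Q}}(n)| < c_{\mathbb{Q}}(m)\bigr\}
\]
is decidable, because $c_{\mathbb{Q}}$ is a total computable numbering with decidable rational comparisons; a total computable enumeration of $S_{\mathbb{Q}}$ is then a computable $\rho_{\beta_{\mathbb{Q}}}^{\max}$-name of $0$. The hard half is that $0$ has no computable $\rho_{\beta_{\mathbb{R}}}^{\max}$-name. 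Suppose for contradiction that $f \in \mathbb{N}^{\mathbb{N}}$ is computable with $\rho_{\beta_{\mathbb{R}}}^{\max}(f) = 0$. Then $\operatorname{Im}(f)$ is r.e.\ and equals the set of all $\langle n,m\rangle$ with $n,m \in \operatorname{dom}(c_{\mathbb{R}})$, $c_{\mathbb{R}}(m) > 0$, and $|c_{\mathbb{R}}(n)| < c_{\mathbb{R}}(m)$. Fix once and for all a computable $c_{\mathbb{R}}$-name $m_1$ of the real $1$. Using the s-m-n theorem, define for every index $e$ a program $e' = s(e)$ which, on input $k$, first simulates $\varphi_e(0), \ldots, \varphi_e(k)$ and, only if all of these halt, outputs (a code of) the rational $0$ as a $2^{-k}$-approximation. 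Then $e' \in \operatorname{dom}(c_{\mathbb{R}})$ with $c_{\mathbb{R}}(e') = 0$ exactly when $e \in \operatorname{Tot}$, and otherwise $e' \notin \operatorname{dom}(c_{\mathbb{R}})$. Consequently $\langle e', m_1\rangle$ lies in $\operatorname{dom}(\beta_{\mathbb{R}})$ iff $e \in \operatorname{Tot}$, and in this case it is a name of $B(0,1)$, forcing $\langle e', m_1 \rangle \in \operatorname{Im}(f)$. Semi-deciding membership of $\langle s(e), m_1\rangle$ in $\operatorname{Im}(f)$ by running $f$ would therefore semi-decide $\operatorname{Tot}$, contradicting the classical fact that $\operatorname{Tot}$ is not r.e.\ (it is $\Pi^0_2$-complete).

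Combining the two halves, $\rho_{\beta_{\mathbb{Q}}}^{\max} \not\le \rho_{\beta_{\mathbb{R}}}^{\max}$, which suffices. The only genuinely non-trivial step is the totalization trick of paragraph two: one must translate non-r.e.-ness of $\operatorname{dom}(c_{\mathbb{R}})$ into a concrete obstruction, and the delicate point is to ensure that the constructed program $e'$ simultaneously lies in $\operatorname{dom}(c_{\mathbb{R}})$ and names the specific real $0$ exactly in the case $e \in \operatorname{Tot}$, so that the test via the single fixed radius $m_1$ really does capture totality. Everything else is a direct unfolding of the definitions of $\beta_{\mathbb{Q}}$, $\beta_{\mathbb{R}}$, and $\rho_{\beta}^{\max}$.
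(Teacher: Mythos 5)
Your proof is correct and follows essentially the same route as the paper: both arguments distinguish the two representations by their computable points, exhibiting a computable $\rho_{\beta_{\mathbb{Q}}}^{\max}$-name of $0$ and showing that a computable $\rho_{\beta_{\mathbb{R}}}^{\max}$-name would computably enumerate too much of $\text{dom}(c_{\mathbb{R}})$. The only difference is that the paper invokes the classical fact that there is no computable enumeration of all computable reals (and so concludes the stronger statement that $\rho_{\beta_{\mathbb{R}}}^{\max}$ has no computable point at all), whereas you justify the obstruction by an explicit reduction from $\text{Tot}$ --- the same totalization trick the paper itself uses later in Section 5.
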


\begin{proof}
This follows from the following strong fact: there is no $\rho_{\beta_{\mathbb{R}}}^{\max}$-computable
point. This follows immediately from the fact that there does not
exist a computable enumeration of all computable reals. 
\end{proof}
\begin{prop}
We have $\rho_{\beta_{\mathbb{R}}}^{\max}\equiv\rho_{\hat{\beta}_{\mathbb{R}}}^{\max}$,
and thus $\rho_{\tilde{\beta}_{\mathbb{R}}}^{\max}$ and $\rho_{\beta_{\mathbb{Q}}}^{\max}$
are not equivalent, even though $\tilde{\beta}_{\mathbb{R}}$ is a
total numbering. 
\end{prop}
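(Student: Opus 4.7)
The plan is to read the first claim (taking the $\hat{\beta}_{\mathbb{R}}$ that appears in the statement to be $\tilde{\beta}_{\mathbb{R}}$, the totalization just introduced, as no $\hat{\beta}_{\mathbb{R}}$ has been defined in this section) as an assertion about equality of the underlying name-sets rather than something requiring any genuine computation. The key observation is that for any $f\in\mathbb{N}^{\mathbb{N}}$ and any $x\in\mathbb{R}$,
\[
\{n\in\mathrm{dom}(\tilde{\beta}_{\mathbb{R}}),\, x\in\tilde{\beta}_{\mathbb{R}}(n)\}=\{n\in\mathbb{N},\, x\in\tilde{\beta}_{\mathbb{R}}(n)\}=\{n\in\mathrm{dom}(\beta_{\mathbb{R}}),\, x\in\beta_{\mathbb{R}}(n)\},
\]
because the indices added by totalization name the empty set and so never contribute a point. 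Plugging this into the formula defining $\rho^{\max}$, I would conclude that $f$ is a $\rho_{\tilde{\beta}_{\mathbb{R}}}^{\max}$-name of $x$ iff $f$ is a $\rho_{\beta_{\mathbb{R}}}^{\max}$-name of $x$; the identity on Baire space realizes both reductions, hence $\rho_{\beta_{\mathbb{R}}}^{\max}\equiv\rho_{\tilde{\beta}_{\mathbb{R}}}^{\max}$ (in fact the two multi-representations coincide as multi-functions).

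The second half is then immediate from the preceding proposition: since $\rho_{\beta_{\mathbb{R}}}^{\max}\not\equiv\rho_{\beta_{\mathbb{Q}}}^{\max}$ (there is no $\rho_{\beta_{\mathbb{R}}}^{\max}$-computable point because the set of computable reals is not r.e., while $\rho_{\beta_{\mathbb{Q}}}^{\max}$ has many computable points, e.g.\ the rationals), the equivalence just established transfers this inequivalence to $\rho_{\tilde{\beta}_{\mathbb{R}}}^{\max}$. There is essentially no obstacle: the only thing that could conceivably go wrong is if the totalization used $\emptyset$-names in a way that affected the $\rho^{\max}$-names, but the definition of $\rho^{\max}$ compares $\mathrm{Im}(f)$ with exactly the set of indices whose basic set contains $x$, so indices pointing to $\emptyset$ are automatically excluded on both sides. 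The whole point of the example is precisely that this innocuous totalization preserves the representation while producing a total numbering, demonstrating that totalness of the numbering does not rescue $\rho^{\max}$ from being ill-behaved under change of a ``morally equivalent'' basis.
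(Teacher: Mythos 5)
Your proof is correct and is essentially the paper's argument: the paper's proof is the single line that the identity on Baire space realizes both directions, which is exactly your observation that the indices added by totalization name $\emptyset$ and hence never enter the set $\{n\in\mathrm{dom}(\tilde{\beta}_{\mathbb{R}}),\,x\in\tilde{\beta}_{\mathbb{R}}(n)\}$, so the two multi-representations coincide; the second half then follows from the preceding proposition as you say. Your reading of $\hat{\beta}_{\mathbb{R}}$ as a typo for $\tilde{\beta}_{\mathbb{R}}$ is also the intended one.
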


\begin{proof}
The identity of Baire space is a realizer for both directions. 
\end{proof}
However, it is very easy to check that for the natural strong inclusion
$\mathring{\subseteq}$ on $\mathbb{R}$, that comes from the metric
of $\mathbb{R}$, we have: 
\begin{prop}
The representations $\rho_{\beta_{\mathbb{R}}}^{\mathring{\subseteq}}$,
$\rho_{\tilde{\beta}_{\mathbb{R}}}^{\mathring{\subseteq}}$ and $\rho_{\beta_{\mathbb{Q}}}^{\mathring{\subseteq}}$
are equivalent. 
\end{prop}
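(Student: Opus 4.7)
The plan is first to reduce the three-way equivalence to proving $\rho_{\beta_{\mathbb{R}}}^{\mathring{\subseteq}} \equiv \rho_{\beta_{\mathbb{Q}}}^{\mathring{\subseteq}}$, since the equivalence of $\rho_{\beta_{\mathbb{R}}}^{\mathring{\subseteq}}$ with the totalized version $\rho_{\tilde{\beta}_{\mathbb{R}}}^{\mathring{\subseteq}}$ follows from the general observation of Section~\ref{sec:Semi-decidable strong inclusion } that names of the empty set never participate in a strong neighborhood basis of an actual point. Throughout I use that the underlying strong inclusion is really a relation at the metric level, $(x, r_{1}) \mathring{\subseteq} (y, r_{2}) \iff d(x,y) + r_{1} < r_{2}$, so it is well-defined and semi-decidable between any two balls whose center and radius are given by Cauchy names, in particular across the bases $\beta_{\mathbb{Q}}$ and $\beta_{\mathbb{R}}$ via the canonical computable inclusion $\mathbb{Q} \hookrightarrow \mathbb{R}_{c}$.

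For $\rho_{\beta_{\mathbb{Q}}}^{\mathring{\subseteq}} \le \rho_{\beta_{\mathbb{R}}}^{\mathring{\subseteq}}$, the realizer converts each $\beta_{\mathbb{Q}}$-name into a $\beta_{\mathbb{R}}$-name termwise; this is computable. To check the output is a strong neighborhood basis for the $\beta_{\mathbb{R}}$ strong inclusion, I fix a computable-real ball $B(y, s)$ containing $x$, and choose by density of $\mathbb{Q}$ a rational $q$ and a positive rational $r$ satisfying $|x - q| < r$ and $|q - y| + r < s$; I then invoke the input strong neighborhood basis property to produce a rational ball $\mathring{\subseteq} B(q, r)$, and conclude by transitivity of $\mathring{\subseteq}$ that this rational ball is also $\mathring{\subseteq} B(y, s)$.

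The other direction is the main obstacle. Given a $\rho_{\beta_{\mathbb{R}}}^{\mathring{\subseteq}}$-name $(b_{n})_{n \in \mathbb{N}}$ of $x$, I enumerate all $\beta_{\mathbb{Q}}$-names $\langle p, q \rangle$ for which some $b_{n}$ satisfies $b_{n} \mathring{\subseteq} \langle p, q \rangle$ in the cross-basis metric sense; this is semi-decidable. Every enumerated rational ball contains $x$, since it strongly contains some $b_{n}$ and $b_{n}$ contains $x$. For the strong neighborhood basis property, I fix a rational ball $B(q_{0}, r_{0})$ containing $x$, set $\epsilon = r_{0} - |x - q_{0}| > 0$, pick a rational $q_{1}$ with $|x - q_{1}| < \epsilon/3$, and take $r_{1} = \epsilon/3$. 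Then $B(q_{1}, r_{1})$ is a computable-real ball containing $x$ and is strongly inside $B(q_{0}, r_{0})$; by the strong neighborhood basis property of the input, some $b_{n} \mathring{\subseteq} B(q_{1}, r_{1})$, so $B(q_{1}, r_{1})$ appears in the output and witnesses the strong neighborhood basis condition for $B(q_{0}, r_{0})$. The delicate point is recognizing that the enumeration only needs one side of a ``sandwich'': requiring $b_{n} \mathring{\subseteq} \langle p, q \rangle$ simultaneously witnesses $x \in B(c_{\mathbb{Q}}(p), c_{\mathbb{Q}}(q))$ and refines any larger ball containing $x$, so there is no need to also bound the output ball from above by another name from the input.
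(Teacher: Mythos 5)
Your argument is correct. Note that the paper offers no proof of this proposition at all (it is introduced with ``it is very easy to check''), so there is no official argument to compare against; your write-up legitimately fills that gap. Both directions are sound: termwise renaming of rational balls as computable-real balls plus transitivity of the metric strong inclusion gives $\rho_{\beta_{\mathbb{Q}}}^{\mathring{\subseteq}}\le\rho_{\beta_{\mathbb{R}}}^{\mathring{\subseteq}}$, and for the converse your semi-decidable enumeration of all rational balls strongly containing some listed ball is exactly the right one-sided condition, since strong inclusion already forces membership of $x$ in every enumerated ball while the $\epsilon/3$ interpolation forces arbitrarily small radii to appear. Two minor points: strictly speaking the names range over finite intersections, i.e.\ over $\hat{\beta}$-names, but your componentwise argument extends verbatim under the paper's extension of $\mathring{\subseteq}$ to tuples; and the reduction to the two-basis case via the totalization is indeed the observation already recorded in Section \ref{sec:Semi-decidable strong inclusion }. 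A shorter route, arguably closer to what the author intends, is to apply the proposition of Section \ref{sec:Metric-spaces-and} that $\rho_{Cau}\equiv\rho_{\beta}^{\mathring{\subseteq}}$ on any non-necessarily computably separable computable metric space to the two dense numbered subsets $(\mathbb{Q},c_{\mathbb{Q}})$ and $(\mathbb{R}_{c},c_{\mathbb{R}})$ of $\mathbb{R}$, and then observe that the two resulting Cauchy representations of $\mathbb{R}$ are computably equivalent; your proof is more self-contained but in effect reproves that special case.
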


\subsection{Representation-equivalent subbases}

The notion of topological space of Definition \ref{def:Usual Definition }
naturally comes with a notion of equivalence of bases:
\begin{defi}
\label{def Lacombe equivalent}Two numbered bases $(\mathfrak{B}_{1},\beta_{1})$
and $(\mathfrak{B}_{2},\beta_{2})$ are called \emph{Lacombe equivalent}
if there is a program that takes as input the $\beta_{1}$-name of
a basic open set $B_{1}$ and outputs the name of a $\beta_{2}$-computable
sequence $(B_{n})_{n\ge2}$ of basic open sets such that 
\[
B_{1}=\bigcup_{n\ge2}B_{n},
\]
and a program that does the converse operation, with the roles of
$(\mathfrak{B}_{1},\beta_{1})$ and $(\mathfrak{B}_{2},\beta_{2})$
reversed. 
\end{defi}

Recall that associated to a Lacombe basis $(\mathfrak{B}_{1},\beta_{1})$
is a representation of open sets: the name of an open set $O$ is
a sequence $(b_{i})_{i\in\mathbb{N}}\in\text{dom}(\beta_{1})$ such
that 
\[
O=\bigcup_{i\ge0}\beta_{1}(b_{i}).
\]

Definition \ref{def Lacombe equivalent} gives the correct notion
of equivalence of basis with respect to this representation by the
following easy proposition:
\begin{prop}
The numbered bases $(\mathfrak{B}_{1},\beta_{1})$ and $(\mathfrak{B}_{2},\beta_{2})$
induce equivalent representations of open sets if and only if $(\mathfrak{B}_{1},\beta_{1})$
and $(\mathfrak{B}_{2},\beta_{2})$ are Lacombe equivalent.
\end{prop}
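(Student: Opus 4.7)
The plan is to establish both directions by unwinding definitions and using that a single basic set has an especially simple representation-name.

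\textbf{Forward direction.} Suppose $\rho_{(\mathfrak{B}_{1},\beta_{1})} \equiv \rho_{(\mathfrak{B}_{2},\beta_{2})}$, and let $F$ be a computable realizer for $\rho_{(\mathfrak{B}_{1},\beta_{1})} \le \rho_{(\mathfrak{B}_{2},\beta_{2})}$. Given a $\beta_{1}$-name $b$ of a basic open set $B=\beta_{1}(b)$, I would manufacture a $\rho_{(\mathfrak{B}_{1},\beta_{1})}$-name of $B$ from $b$ by a trivial computable encoding (e.g., the sequence listing only $b$, padded with ``empty-set'' codes if needed to match the chosen encoding convention from the earlier paragraph). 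Applying $F$ to this name yields a $\rho_{(\mathfrak{B}_{2},\beta_{2})}$-name of $B$, which by definition is a $\beta_{2}$-computable sequence $(B_{n})_{n\ge 2}$ of basic sets with $B=\bigcup_{n\ge 2} B_{n}$. This is exactly the first translation required by Lacombe equivalence; the other direction is symmetric, produced from a realizer of $\rho_{(\mathfrak{B}_{2},\beta_{2})} \le \rho_{(\mathfrak{B}_{1},\beta_{1})}$.

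\textbf{Backward direction.} Suppose the two bases are Lacombe equivalent, witnessed by a computable procedure $T$ that sends each $\beta_{1}$-name $b$ to a $\beta_{2}$-indexed sequence $(c_{b,j})_{j\in\mathbb{N}}$ with $\beta_{1}(b)=\bigcup_{j}\beta_{2}(c_{b,j})$. Given a $\rho_{(\mathfrak{B}_{1},\beta_{1})}$-name $(b_{i})_{i\in\mathbb{N}}$ of an open set $O=\bigcup_{i}\beta_{1}(b_{i})$, I would run $T$ on each $b_{i}$ in parallel and flatten the resulting double-indexed family $(c_{b_{i},j})_{i,j}$ into a single sequence using a computable bijection $\mathbb{N}^{2}\to\mathbb{N}$. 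The union of the sets enumerated by this flattened sequence is
\[
\bigcup_{i,j}\beta_{2}(c_{b_{i},j}) \;=\; \bigcup_{i}\beta_{1}(b_{i}) \;=\; O,
\]
so it is a $\rho_{(\mathfrak{B}_{2},\beta_{2})}$-name of $O$, produced uniformly and computably from the input. This gives $\rho_{(\mathfrak{B}_{1},\beta_{1})} \le \rho_{(\mathfrak{B}_{2},\beta_{2})}$, and the reverse reduction is symmetric.

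\textbf{Main obstacle.} The argument is essentially a uniformity bookkeeping exercise; the only subtle point is compatibility with whatever convention is used to encode sequences (including possibly empty sets or ``missing'' entries) as Baire-space elements in the representation of open sets, and making sure the translation $T$ from Lacombe equivalence can indeed be invoked on inputs that are themselves components of a $\rho$-name. Since Lacombe equivalence is stated in terms of $\beta_{1}$-names of individual basic sets, and a $\rho$-name is exactly a sequence of such names, no further difficulty arises, and the flattening by a standard pairing function completes the reduction.
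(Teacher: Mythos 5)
Your proof is correct: the paper states this proposition without proof (calling it ``easy''), and your argument --- reducing a single basic set to a trivially computable $\rho$-name for the forward direction, and flattening the parallel applications of the Lacombe translation via a pairing function for the backward direction --- is exactly the standard unwinding the author evidently had in mind. The only conventions to keep straight are the $n+1$ offset in the open-set representation (so that $0^{\omega}$ names the empty set) and passing from a uniformly computable output of the realizer to an index of a computable sequence via s-m-n, both of which you flag appropriately.
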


Other notions of equivalence of bases can be appropriate, we quote
one to illustrate the variety of possible definitions of equivalence
of bases. The following notion is appropriate to a notion of effective
basis that was first described by Nogina \cite{Nogina1966} for numbered
sets and used in the context of Type 2 computability in \cite{GREGORIADES2016}.
\begin{defi}
\label{def Lacombe equivalent-1}Suppose that $(X,\rho)$ is a second
countable represented space. Two numbered bases $(\mathfrak{B}_{1},\beta_{1})$
and $(\mathfrak{B}_{2},\beta_{2})$ are called \emph{Nogina equivalent}
if there is a program that takes as input the $\beta_{1}$-name of
a basic open set $B_{1}$ and the $\rho$-name of a point $x$ in
$B_{1}$ and outputs the $\beta_{2}$-name of a basic open sets $B_{2}$
such that $x\in B_{2}\subseteq B_{1}$, and a program that does the
converse operation, with the roles of $(\mathfrak{B}_{1},\beta_{1})$
and $(\mathfrak{B}_{2},\beta_{2})$ reversed. 
\end{defi}

In this case, one can also define a certain naturally associated representation
of open sets, and show that two numbered bases define equivalent representations
of open sets exactly when they are Nogina equivalent \cite{Rauzy2023NewDefs}. 

However, neither of these notions of equivalence of bases is appropriate
to the study of the multi-representations $\rho_{\beta_{1}}^{\max}$,
$\rho_{\beta_{1}}^{\min}$ and $\rho_{\beta_{1}}^{\mathring{\subseteq}}$
associated to a numbered subbasis $(B_{1},\beta_{1},\mathring{\subseteq})$.
In particular, Lacombe equivalent bases can yield non-equivalent representations
of points.

We now describe the notion of equivalence of bases appropriate to
the study of the representations $\rho_{\beta_{1}}^{\mathring{\subseteq}}$. 

When $A$ is a set, denote by $A^{*}$ the set of (empty or) finite
sequences of elements of $A$. 

If $\mathring{\subseteq}_{1}$ is a strong inclusion relation on $\text{dom}(\beta_{1})$,
we extend $\mathring{\subseteq}_{1}$ to $\text{dom}(\beta_{1})^{*}$
by 
\[
(b_{1},...,b_{n})\mathring{\subseteq}_{1}(b'_{1},...,b'_{m})\iff\forall i\le m,\exists j\le n,\,b_{j}\mathring{\subseteq}_{1}b'_{i}.
\]

\begin{defi}
\label{def:RPZ equivalence of basis}Consider two numbered bases $(\mathfrak{B}_{1},\beta_{1},\mathring{\subseteq}_{1})$
and $(\mathfrak{B}_{2},\beta_{2},\mathring{\subseteq}_{2})$ of a
set $X$ equipped with strong inclusion relations. We say that $(\mathfrak{B}_{1},\beta_{1},\mathring{\subseteq}_{1})$
is \emph{representation coarser} than $(\mathfrak{B}_{2},\beta_{2},\mathring{\subseteq}_{2})$
if there exists a computable function $f:\subseteq\text{dom}(\beta_{2})^{*}\rightarrow\text{dom}(\beta_{1})^{*}$
defined at least on all sequences $(b_{1},...,b_{n})\in\text{dom}(\beta_{2})^{*}$
such that $\beta_{2}(b_{1})\cap...\cap\beta_{2}(b_{n})\neq\emptyset$,
and such that: 
\begin{itemize}
\item For all sequence $(b_{1},...,b_{n})\in\text{dom}(\beta_{2})^{*}$,
if $f((b_{1},...,b_{n}))=(d_{1},...,d_{m})$, then $\beta_{2}(b_{1})\cap...\cap\beta_{2}(b_{n})\subseteq\beta_{1}(d_{1})\cap...\cap\beta_{1}(d_{m})$\footnote{\noindent In case $f((b_{1},...,b_{n}))$ is just the empty sequence,
we use the convention that an empty intersection\linebreak gives $X$.}. 
\item For all $x$ in $X$ and $d$ in $\text{dom}(\beta_{1})$ with $x\in\beta_{1}(d)$,
for any sequence $(b_{i})_{i\in\mathbb{N}}\in\text{dom}(\beta_{2})$
that defines a strong basis of neighborhood of $x$, there exists
$k\in\mathbb{N}$ such that for all $n\ge k$,
\[
f((b_{1},...,b_{n}))\mathring{\subseteq}_{1}d.
\]
\end{itemize}
We say that $(\mathfrak{B}_{1},\beta_{1},\mathring{\subseteq}_{1})$
and $(\mathfrak{B}_{2},\beta_{2},\mathring{\subseteq}_{2})$ are \emph{representation-equivalent}
if each one is representation coarser than the other. 
\end{defi}

The second condition written above says that as the sequence $(b_{1},b_{2},...)$
closes in on a point, the sequence of images by $f$ should also produce
a strong neighborhood basis of this point. 

We also introduce a more restrictive notion of equivalence of bases,
which implies equivalence of the associated representations, and which
is more natural to work with:
\begin{defi}
\label{def:RPZ equivalence of basis- Uniform }Consider two numbered
bases $(\mathfrak{B}_{1},\beta_{1},\mathring{\subseteq}_{1})$ and
$(\mathfrak{B}_{2},\beta_{2},\mathring{\subseteq}_{2})$ of a set
$X$ equipped with strong inclusion relations. We say that $(\mathfrak{B}_{1},\beta_{1},\mathring{\subseteq}_{1})$
is \emph{uniformly representation coarser} than $(\mathfrak{B}_{2},\beta_{2},\mathring{\subseteq}_{2})$
if there exists a computable  function $f:\subseteq\text{dom}(\beta_{2})^{*}\rightarrow\text{dom}(\beta_{1})^{*}$
defined at least on all sequences $(b_{1},...,b_{n})\in\text{dom}(\beta_{2})^{*}$
such that $\beta_{2}(b_{1})\cap...\cap\beta_{2}(b_{n})\neq\emptyset$,
and such that: 
\begin{itemize}
\item For all sequence $(b_{1},...,b_{n})\in\text{dom}(\beta_{2})^{*}$,
if $f((b_{1},...,b_{n}))=(d_{1},...,d_{m})$, then $\beta_{2}(b_{1})\cap...\cap\beta_{2}(b_{n})\subseteq\beta_{1}(d_{1})\cap...\cap\beta_{1}(d_{m})$. 
\item For all $x$ in $X$ and $d$ in $\text{dom}(\beta_{1})$ with $x\in\beta_{1}(d)$,
there exists $(b_{1},...,b_{n})$ in $\text{dom}(\beta_{2})$, with
$x\in\beta_{2}(b_{1})\cap...\cap\beta_{2}(b_{n})$, and such that
\[
\forall(b'_{1},...,b'_{k})\in\text{dom}(\beta_{2})^{*},\,(b'_{1},...,b'_{k})\mathring{\subseteq}_{2}(b_{1},...,b_{n})\implies f((b'_{1},...,b'_{k}))\mathring{\subseteq}_{1}d.
\]
\end{itemize}
We say that $(\mathfrak{B}_{1},\beta_{1},\mathring{\subseteq}_{1})$
and $(\mathfrak{B}_{2},\beta_{2},\mathring{\subseteq}_{2})$ are \emph{uniformly
representation-equivalent} if each one is uniformly representation
coarser than the other. 
\end{defi}

One checks that the uniform version is more restrictive that the general
notion of representation-equivalence. 
\begin{lem}
Let $(\mathfrak{B}_{1},\beta_{1},\mathring{\subseteq}_{1})$ and $(\mathfrak{B}_{2},\beta_{2},\mathring{\subseteq}_{2})$
be two numbered bases of a set $X$. If $(\mathfrak{B}_{1},\beta_{1},\mathring{\subseteq}_{1})$
is uniformly representation coarser than $(\mathfrak{B}_{2},\beta_{2},\mathring{\subseteq}_{2})$,
then it is also representation coarser than $(\mathfrak{B}_{2},\beta_{2},\mathring{\subseteq}_{2})$.
If $(\mathfrak{B}_{1},\beta_{1},\mathring{\subseteq}_{1})$ and $(\mathfrak{B}_{2},\beta_{2},\mathring{\subseteq}_{2})$
are uniformly representation-equivalent, then they are also representation-equivalent. 
\end{lem}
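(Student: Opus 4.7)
The plan is to verify that the very same function $f$ that witnesses uniform representation coarseness from $(\mathfrak{B}_{2},\beta_{2},\mathring{\subseteq}_{2})$ to $(\mathfrak{B}_{1},\beta_{1},\mathring{\subseteq}_{1})$ also witnesses the non-uniform version. The first bullet of Definitions~\ref{def:RPZ equivalence of basis} and~\ref{def:RPZ equivalence of basis- Uniform } is literally the same condition, so no work is needed there. All the content lies in deriving the second bullet of Definition~\ref{def:RPZ equivalence of basis} from the second bullet of Definition~\ref{def:RPZ equivalence of basis- Uniform }. The equivalence statement of the lemma then follows by applying the first statement symmetrically in both directions.

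Fix $x \in X$ and $d \in \text{dom}(\beta_{1})$ with $x \in \beta_{1}(d)$. By the uniform hypothesis, I obtain a finite sequence $(b_{1},\ldots,b_{n}) \in \text{dom}(\beta_{2})^{*}$ with $x \in \beta_{2}(b_{1}) \cap \cdots \cap \beta_{2}(b_{n})$ such that every $(b'_{1},\ldots,b'_{k}) \mathring{\subseteq}_{2} (b_{1},\ldots,b_{n})$ satisfies $f((b'_{1},\ldots,b'_{k})) \mathring{\subseteq}_{1} d$. Now let $(c_{i})_{i \in \mathbb{N}}$ be an arbitrary enumeration of a strong neighborhood basis of $x$ with respect to $\mathring{\subseteq}_{2}$. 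Since each $b_{j}$ is a $\beta_{2}$-name of a set containing $x$, the defining property of a strong neighborhood basis gives, for each $j \in \{1,\ldots,n\}$, an index $i_{j}$ with $c_{i_{j}} \mathring{\subseteq}_{2} b_{j}$.

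The central observation is that as soon as a truncation $(c_{1},\ldots,c_{m})$ contains all the witnesses $c_{i_{1}},\ldots,c_{i_{n}}$, strong inclusion at the level of sequences follows directly from the extension formula: $(c_{1},\ldots,c_{m}) \mathring{\subseteq}_{2} (b_{1},\ldots,b_{n})$ holds precisely when for each $j \leq n$ there exists some $i \leq m$ with $c_{i} \mathring{\subseteq}_{2} b_{j}$. Setting $k := \max(i_{1},\ldots,i_{n})$, this is satisfied for every $m \geq k$, so the uniform property yields $f((c_{1},\ldots,c_{m})) \mathring{\subseteq}_{1} d$, which is exactly the conclusion required by Definition~\ref{def:RPZ equivalence of basis}.

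There is no serious obstacle here; the only point to be careful about is that the extension of $\mathring{\subseteq}_{2}$ to finite sequences only asks \emph{some} term on the left to strongly include each term on the right, so any enumeration of a strong neighborhood basis eventually lists enough elements to dominate a fixed finite witness sequence supplied by the uniform condition. Once the first statement is established, the equivalence assertion is an immediate consequence applied to each direction of the equivalence.
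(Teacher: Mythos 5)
Your argument is correct: the paper itself omits the proof of this lemma (it is prefaced only by ``One checks that\ldots''), and your verification is exactly the routine check intended. The key step --- using the strong-neighborhood-basis property to find, for each $b_{j}$ of the finite witness sequence, an index $i_{j}$ with $c_{i_{j}}\mathring{\subseteq}_{2}b_{j}$, and then taking $k=\max(i_{1},\ldots,i_{n})$ so that every longer truncation dominates $(b_{1},\ldots,b_{n})$ under the sequence-level extension of $\mathring{\subseteq}_{2}$ --- is precisely what makes the uniform condition imply the non-uniform one, and the equivalence statement follows by symmetry as you say.
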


It is easy to see that $(\mathfrak{B}_{1},\beta_{1},\mathring{\subseteq}_{1})$
being representation coarser than $(\mathfrak{B}_{2},\beta_{2},\mathring{\subseteq}_{2})$
does imply that the topology generated by $\mathfrak{B}_{1}$ as subbasis
is coarser than the topology generated by the subbasis $\mathfrak{B}_{2}$
(in terms of classical mathematics).

The following lemma shows why we have the correct notion of equivalence
of bases.
\begin{lem}
Let $(\mathfrak{B}_{1},\beta_{1},\mathring{\subseteq}_{1})$ and $(\mathfrak{B}_{2},\beta_{2},\mathring{\subseteq}_{2})$
be two numbered bases of a set $X$. Then 
\[
\rho_{\beta_{2}}^{\mathring{\subseteq}_{2}}\le\rho_{\beta_{1}}^{\mathring{\subseteq}_{1}}\iff\text{(\ensuremath{\mathfrak{B}_{1}},\ensuremath{\beta_{1}},\ensuremath{\mathring{\subseteq}_{1}}) is effectively coarser than (\ensuremath{\mathfrak{B}_{2}},\ensuremath{\beta_{2}},\ensuremath{\mathring{\subseteq}_{2}})};
\]
\[
\rho_{\beta_{2}}^{\mathring{\subseteq}_{2}}\equiv\rho_{\beta_{1}}^{\mathring{\subseteq}_{1}}\iff\text{(\ensuremath{\mathfrak{B}_{1}},\ensuremath{\beta_{1}},\ensuremath{\mathring{\subseteq}_{1}}) and (\ensuremath{\mathfrak{B}_{2}},\ensuremath{\beta_{2}},\ensuremath{\mathring{\subseteq}_{2}})}\text{ are representation-equivalent}.
\]
\end{lem}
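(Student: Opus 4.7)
The plan is to prove the first equivalence ($\leq$), from which the second ($\equiv$) follows by symmetry. The strategy is a standard currying: translate between a global realizer on Baire space and a local function on finite sequences of basic names. The main subtlety throughout is that $\rho_{\beta_i}^{\mathring{\subseteq}_i}$-names are lists of $\hat{\beta}_i$-names, while Definition~\ref{def:RPZ equivalence of basis} is phrased in terms of $\beta_i$-names.

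For the $(\Leftarrow)$ direction, suppose $f$ witnesses that $(\mathfrak{B}_1,\beta_1,\mathring{\subseteq}_1)$ is representation coarser than $(\mathfrak{B}_2,\beta_2,\mathring{\subseteq}_2)$. Given a $\rho_{\beta_2}^{\mathring{\subseteq}_2}$-name $p=(s_i)_{i\in\mathbb{N}}$ of $x$, I flatten $p$ by listing in order all $\beta_2$-names occurring in the $\hat{\beta}_2$-names $s_i$, producing a sequence $(b_j)_{j\in\mathbb{N}}$ in $\mathrm{dom}(\beta_2)$. This flattening is a strong neighborhood basis of $x$ at the $\beta_2$-level: for any $d\in\mathrm{dom}(\beta_2)$ with $x\in\beta_2(d)$, the singleton $\hat{\beta}_2$-name for $\{d\}$ is refined by some $s_i$, and hence by some $b_j$ inside $s_i$. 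I then output the sequence $\bigl(f(b_1,\ldots,b_n)\bigr)_{n\geq 1}$. Clause~1 of Definition~\ref{def:RPZ equivalence of basis} gives $x\in\hat{\beta}_1(f(b_1,\ldots,b_n))$; clause~2, applied componentwise to each $d_j$ in a $\hat{\beta}_1$-name $d=(d_1,\ldots,d_m)$ with $x\in\hat{\beta}_1(d)$, produces thresholds $k_j$ whose maximum $n_0$ guarantees $f(b_1,\ldots,b_n)\mathring{\subseteq}_1 d$ in the extended sense for every $n\geq n_0$.

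For the $(\Rightarrow)$ direction, suppose $H$ is a computable realizer for the reduction. I define $f(b_1,\ldots,b_n)$ by simulating $H$ on the partial input $(s_1,\ldots,s_n)$, where $s_i$ is the canonical $\hat{\beta}_2$-name for $\beta_2(b_1)\cap\ldots\cap\beta_2(b_i)$, and returning the concatenation of the $\hat{\beta}_1$-names that $H$ outputs before it queries position $\geq n$ (with a suitable cap to ensure a finite output). For clause~1, pick $x$ in $\beta_2(b_1)\cap\ldots\cap\beta_2(b_n)$; by standing assumption $x$ admits a $\hat{\beta}_2$-level strong neighborhood basis $(t_j)$, so $(s_1,\ldots,s_n,t_1,t_2,\ldots)$ is a valid $\rho_{\beta_2}^{\mathring{\subseteq}_2}$-name of $x$, and the initial segment of $H$'s output on this name is precisely what defines $f(b_1,\ldots,b_n)$, so each of its components contains $x$. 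For clause~2, given a $\beta_2$-level strong neighborhood basis $(b_i)$ of $x$ and $d\in\mathrm{dom}(\beta_1)$ with $x\in\beta_1(d)$, the crucial observation is that the prefix sequence $s_i:=(b_1,\ldots,b_i)$ assembles into a valid $\rho_{\beta_2}^{\mathring{\subseteq}_2}$-name of $x$: for any $\hat{\beta}_2$-name $d'=(d'_1,\ldots,d'_r)$ with $x\in\hat{\beta}_2(d')$, pick $i_j$ with $b_{i_j}\mathring{\subseteq}_2 d'_j$ for each $j$ and take $i=\max_j i_j$, yielding $s_i\mathring{\subseteq}_2 d'$ in the extended sense. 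Running $H$ on this name produces some entry containing an element $\mathring{\subseteq}_1 d$, and by continuity of $H$ this entry depends only on a finite prefix of the input; for $n$ beyond that prefix's length, $f(b_1,\ldots,b_n)$ contains the required refinement.

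The main obstacle throughout is the passage between the $\beta_i$-level used in Definition~\ref{def:RPZ equivalence of basis} and the $\hat{\beta}_i$-level used in $\rho_{\beta_i}^{\mathring{\subseteq}_i}$. The key technical observation making both directions work is that the nested prefixes of a $\beta$-level strong neighborhood basis, viewed as $\hat{\beta}$-names, already form a $\hat{\beta}$-level strong neighborhood basis; without this one would need an extra assumption such as Spreen's strong-basis property to combine $\beta$-names into a single $\hat{\beta}$-name with all the necessary refinement properties. The equivalence $\rho_{\beta_2}^{\mathring{\subseteq}_2}\equiv\rho_{\beta_1}^{\mathring{\subseteq}_1}$ then follows by applying the established $\leq$-equivalence in both directions.
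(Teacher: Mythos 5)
Your proof is correct and follows essentially the same route as the paper's: the backward direction applies $f$ along initial segments of the name and concatenates the results, while the forward direction extracts a monotone word function from the realizer (the paper cites the classical isotone-function characterization of Type 2 computability where you simulate the machine on finite prefixes by hand) and then verifies the two clauses by the same completion and ``strong neighborhood basis in, strong neighborhood basis out'' arguments. Your explicit bookkeeping for the passage between $\beta$-names and $\hat{\beta}$-names (flattening on one side, nested prefixes on the other) is a detail the paper leaves implicit, but it does not change the structure of the argument.
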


\begin{proof}
The second equivalence is a direct consequence of the first one, we
thus focus on the first one. 

Suppose first that $(\mathfrak{B}_{1},\beta_{1},\mathring{\subseteq}_{1})$
is effectively coarser than $(\mathfrak{B}_{2},\beta_{2},\mathring{\subseteq}_{2})$,
and thus that we have a function $f$ as in Definition \ref{def:RPZ equivalence of basis}. 

Given the $\rho_{\beta_{2}}^{\mathring{\subseteq}_{2}}$-name of a
point $x$, we show how to compute a $\rho_{\beta_{1}}^{\mathring{\subseteq}_{1}}$-name
of it. 

Simply apply the function $f$ along all initial segments of the $\rho_{\beta_{2}}^{\mathring{\subseteq}_{2}}$-name
of $x$, and output the concatenation of all the results. The fact
that $f$ produces oversets implies that $x$ does belong to all produced
basic open sets. The second condition on $f$ guarantees that we indeed
construct a strong neighborhood basis of $x$. 

Suppose now that $\rho_{\beta_{2}}^{\mathring{\subseteq}_{2}}\le\rho_{\beta_{1}}^{\mathring{\subseteq_{1}}}$. 

By a classical characterization of Type 2 computable functions in
terms of isotone\footnote{A function $f:A^{*}\rightarrow B^{*}$ is called \emph{isotone} if
it is increasing for the prefix relation: if $u$ is a prefix of $v$,
then $f(u)$ is a prefix of $f(v)$. } functions \cite{Schroeder2002}, this implies that there is a computable
isotone function $f:\subseteq\text{dom}(\beta_{2})^{*}\rightarrow\text{dom}(\beta_{1})^{*}$
that testifies for the relation $\rho_{\beta_{2}}^{\mathring{\subseteq}_{2}}\le\rho_{\beta_{1}}^{\mathring{\subseteq_{1}}}$.
This function $f$ is defined at least on all sequences $(b_{1},...,b_{n})\in\text{dom}(\beta_{2})^{*}$
such that $\beta_{2}(b_{1})\cap...\cap\beta_{2}(b_{n})\neq\emptyset$,
because any such sequence is the beginning of the $\rho_{\beta_{2}}^{\mathring{\subseteq}}$-name
of some point. 

Note first that for any sequence $(b_{1},...,b_{n})\in\text{dom}(\beta_{2})^{*}$,
if $f((b_{1},...,b_{n}))=(d_{1},...,d_{m})$, then $\beta_{2}(b_{1})\cap...\cap\beta_{2}(b_{n})\subseteq\beta_{1}(d_{1})\cap...\cap\beta_{1}(d_{m})$.
Indeed suppose that this is not the case. It means that there is a
point $x\in\beta_{2}(b_{1})\cap...\cap\beta_{2}(b_{n})\setminus(\beta_{1}(d_{1})\cap...\cap\beta_{1}(d_{m}))$.
The sequence $(b_{1},...,b_{n})$ could be completed to a $\rho_{\beta_{2}}^{\mathring{\subseteq}}$-name
of $x$, however $f$ cannot map a sequence that starts with $(b_{1},...,b_{n})$
to a $\rho_{\beta_{1}}^{\mathring{\subseteq}}$-name of $x$. This
is a contradiction, and thus the desired inclusion holds. 

Finally, $f$ applied along a strong neighborhood basis (with respect
to $(\mathfrak{B}_{2},\beta_{2},\mathring{\subseteq}_{2})$) of a
point $x$ will always produce a strong neighborhood basis of $x$
(with respect to $(\mathfrak{B}_{1},\beta_{1},\mathring{\subseteq}_{1})$).
This guarantees that the last condition of Definition \ref{def:RPZ equivalence of basis}
is satisfied. 
\end{proof}
The reason why we introduce the uniform version of representation-equivalence
is twofold: 
\begin{itemize}
\item It has a nice interpretation in metric spaces, and it is in general
easier to understand, see the example below.
\item It is unclear whether two bases can be representation-equivalent while
not being uniformly representation-equivalent. We thus ask:
\end{itemize}
\begin{prob}
Can two subbases $(\mathfrak{B}_{1},\beta_{1},\mathring{\subseteq}_{1})$
and $(\mathfrak{B}_{2},\beta_{2},\mathring{\subseteq}_{2})$ of a
set $X$ be representation-equivalent while not being uniformly representation-equivalent?
\end{prob}

\begin{exa}
Suppose we are set in a separable metric space $(X,d)$. There are
many possible numberings of open balls: for any numbering $\nu$ of
a dense subset $A\subseteq X$, and any numbering $c:\subseteq\mathbb{N}\rightarrow T\subseteq\mathbb{R}$
of a set of positive real numbers that has $0$ as an accumulation
point, the numbering 
\[
\beta(\langle n,m\rangle)=B(\nu(n),c(m))
\]
is a numbering of a basis for the topology of $X$. 

For two such numberings $\beta_{1}$ and $\beta_{2}$, the condition
of uniform representation-equivalence with respect to the strong inclusion
of metric spaces says that there is an algorithm that, given a finite
intersection $B_{1}\cap...\cap B_{n}$ of balls given by $\beta_{1}$-names,
covers it by a finite intersection $B'_{1}\cap...\cap B'_{m}\supseteq B_{1}\cap...\cap B_{n}$
of balls given by $\beta_{2}$-names, and, additionally, that when
the minimal radius appearing in the first intersection $B_{1}\cap...\cap B_{n}$
goes to $0$, then the minimal radius appearing in $B'_{1}\cap...\cap B'_{m}$
should go to $0$ as well. 

The condition of representation-equivalence only asks of this algorithm
that along each sequence $(b_{i})_{i\in\mathbb{N}}$ of $\beta_{1}$-names
which defines a sequence of balls with arbitrarily small radii, the
corresponding sequence of $\beta_{2}$-names should also encode small
radii, but the dependence does not have to be uniform in the radii
anymore. 
\end{exa}

\bibliographystyle{alphaurl}
\bibliography{NoteRPZ}

\end{document}